\documentclass[a4paper,twoside, 11pt]{article}
\usepackage[margin=1in]{geometry}
\usepackage{amsmath,amsfonts,amsthm}
\usepackage{graphicx,cite,times}
\usepackage{enumitem,version,lipsum,marvosym}
\usepackage{pdfpages,hyperref,fancyhdr}
\usepackage{tikz,xcolor,hyperref}
\definecolor{lime}{HTML}{A6CE39}
\DeclareRobustCommand{\orcidicon}{\hspace{-1mm}
	\begin{tikzpicture}
		\draw[lime, fill=lime](0,0) circle[radius=0.16] 
		node[white] {{\fontfamily{qag}\selectfont \tiny ID}};
		\draw[white, fill=white] (-0.0625,0.095) 
		circle [radius=0.007];
	\end{tikzpicture}
	\hspace{-1mm}}
\foreach \x in {A, ..., Z}{\expandafter\xdef\csname orcid\x\endcsname{\noexpand\href{https://orcid.org/\csname orcidauthor\x\endcsname}
		{\noexpand\orcidicon}}}

\newtheorem{theorem}{Theorem}[section]
\newtheorem{corollary}[theorem]{Corollary}
\newtheorem{lemma}[theorem]{Lemma}

\newtheorem{example}{Example}

\newtheorem{definition}{Definition}[section]

\numberwithin{equation}{section}
\linespread{1.21}\Large

\newcommand{\N}{\mathbb{N}}

\begin{document} 
	\title{The sequence of  higher order Mersenne numbers and associated binomial transforms}
	\author{Kalika Prasad, Munesh Kumari, Rabiranjan Mohanta, Hrishikesh Mahato
	}
	\date{ } 
	\maketitle
\begin{abstract}
	In this article, we introduce and study a new integer sequence referred to as the higher order Mersenne sequence. The proposed sequence is analogous to the higher order Fibonacci numbers and closely associated with the Mersenne numbers. Here, we discuss various algebraic properties such as Binet’s formula, Catalan’s identity, d’Ocagne’s identity, generating functions, finite and binomial sums, etc. of this new sequence, and some inter-relations with Mersenne and Jacobsthal numbers. Moreover, we study the sequence generated from the binomial transforms of the higher order Mersenne numbers and present the recurrence relation and algebraic properties of them. Lastly, we give matrix generators and tridiagonal matrix presentation for higher order Mersenne numbers. 
	\end{abstract}
	\textit{\textbf{Keywords:} Higher order Mersenne numbers, Partial and binomial sums, Binomial transform, Generating functions, Recurrence relations, Matrix generators.}
	\\\textit{\textbf{2020 MS Classifications:} 11B37, 11B39, 11B83.}
\section{Introduction} 
In recent years, some articles appeared on higher order sequences associated with the existing number sequence, where authors developed many interesting properties. The study begins with the earlier work of Randi{\'c} et al. (1996) \cite{randic1996higher} where authors proposed the higher order Fibonacci numbers and investigated its various algebraic properties. For higher order Fibonacci numbers, Kizilate{\c{s}} and Kone \cite{kizilatecs2021higher} extended the study in quaternion algebra. Cook et al. \cite{cook2013some} obtained some identities for Jacobsthal (Jacobsthal-Lucas) numbers that follow  higher order recurrence relations whereas Uysal and Özkan \cite{uysal2022higher} studied the quaternion algebra of higher order Jacobsthal–Lucas numbers. {\"O}zimamo{\u{g}}lu \cite{ozimamouglu2023hyper} studied the hyper complex numbers whose components are higher order Pell numbers.

Motivated by these works on the extension of a classical number sequence to a higher order, we propose and study the higher order Mersenne numbers. Here, we investigate various algebraic properties of higher order Mersenne numbers and  show connections with Mersenne, Mersenne-Lucas, and Jacobsthal numbers.

Mersenne numbers $\{M_n\}$ are given by the formula $2^n-1$ and due to its special structure, this sequence is one of the fascinating integer sequence in number theory to further investigate. Other interesting integer sequences closely related to Mersenne numbers are Mersenne-Lucas and Fermat numbers that are given by the formulas $2^n+1$ and $2^{2^n}+1$, respectively. In number theory and computer science, an active area of research involves finding the Mersenne primes and Fermat primes. As it is known that if $M_n$ is prime then $n$ must be a prime but converse need not be true i.e. if $p$ is prime then $M_p$ may or may not be a prime and this investigation can be extended for our proposed sequence of higher order Mersenne numbers.

In recent years, many articles appeared on generalization, extension, and application of Mersenne and Mersenne-Lucas numbers. For example, Chelgham and Boussayoud \cite{chelgham2021k} proposed and studied the $k$-Mersenne-Lucas numbers.
Kumari et al. \cite{kumari2021some} proposed a new family of $k$-Mersenne numbers and investigated generalized $k$-Gaussian Mersenne numbers, associated polynomials, and various properties of them. 
In normed division algebra, Daşdemir and Göksal \cite{dasdemir2019gaussian} have defined Mersenne quaternions and obtained its Binet's formula, generating function, etc. whereas Eser et al. \cite{eser2023mersenne} studied the hybrinomial quaternions of Mersenne and Mersenne-Lucas numbers. Kumari et al. \cite{kumari2022k} and Usyal et al. \cite{uysal2025hyperbolic} extended the study to Mersenne octonions and Mersenne hyperbolic octonions, respectively. 
Some recent developments on Mersenne numbers, their generalizations, and applications can be seen in \cite{frontczak2020mersenne,Soykan_2021,kumari2023MerCirc,soykan2022generalized,tacsci2021some}).

 The Mersenne sequence $\{M_n\}_{n\geq 0}$ is defined recursively as \cite{catarino2016mersenne}
	\begin{equation}\label{mersenne_recurence}
		M_{n+2}=3M_{n+1}-2M_n \quad\text{with }~ M_0 =0,~M_1=1.
	\end{equation}
	Note that, it can be also given as $M_{n+1} = 2M_n + 1$ with $M_0 =0$. Thus, the first few Mersenne numbers are $0,1,3,7,15,31,63,127,255,...$. 
	\\The Mersenne-Lucas sequence $\{m_n\}_{n\geq 0}$ is given by the same relation \eqref{mersenne_recurence} but with $m_0=2, m_1=3$. Thus, the first few  Mersenne-Lucas numbers are $ 2,3,5,9,17,33,65,129,...$.
	
	The characteristic equation corresponding to \eqref{mersenne_recurence} is $r^2-3r+2=0$ whose roots are $r_1=1$ and $r_2=2$. Thus, the Binet's formulas for Mersenne and Mersenne-Lucas numbers are given as 
	\begin{equation} \label{binetMersenne}
		M_n=2^n-1 \quad\text{and} \quad m_n=2^n+1.
	\end{equation}
The aim of this study is to extend the Mersenne numbers to higher order in the same way that researchers did for the higher order Fibonacci, Jacobsthal and Jacobsthal-Lucas numbers. Besides algebraic properties of this new sequence, we also present some relations between Mersenne numbers, higher order Mersenne numbers, higher order Jacobsthal and Jacobsthal-Lucas numbers, etc. Finally, we give partial sum, binomial sum, binomial transform, sequence generated from  binomial transform, and matrix representations for this new sequence.

In next section, we start with establishing some results for Mersenne numbers which will be required in our work. Then we introduce higher order Mersenne numbers and present their algebraic properties starting with Binet's formula.
\section{Main work} 
	\begin{theorem} \label{thm1}
		For every positive integer $k$ and $n\geq0$, we have 
		\begin{equation*}
			M_{k(n+1)+\alpha}=2^kM_{kn+\alpha}+M_k;\quad\text{with } \alpha=0,1,2,...,k-1.
		\end{equation*}
	\end{theorem}
	\begin{proof}
		From Binet's formula \eqref{binetMersenne}, we have
		\begin{align*}
			M_{k(n+1)+\alpha} &=2^{k(n+1)+\alpha}-1 \\
			& =2^{kn+\alpha}2^k-2^k+2^k-1 \\
			& =2^k(2^{kn+\alpha}-1)+(2^k-1) \\
			& =2^kM_{kn+\alpha}+M_k.
		\end{align*}
	\end{proof}
\begin{corollary}	
	Let $k\in\mathbb{N}$ and $n\geq0$. Then
	\begin{equation}\label{use_to_proof}
		M_{k(n+1)}=2^kM_{kn}+M_k.
	\end{equation}
\end{corollary}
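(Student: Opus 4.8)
The plan is to obtain \eqref{use_to_proof} as the immediate specialization of Theorem \ref{thm1} to the value $\alpha = 0$. Since the range of admissible shifts in Theorem \ref{thm1} is $\alpha = 0,1,2,\dots,k-1$, and $0$ lies in this range for every $k\in\mathbb{N}$, the substitution is legitimate with no side conditions to verify. Plugging $\alpha=0$ into
\[
	M_{k(n+1)+\alpha}=2^kM_{kn+\alpha}+M_k
\]
collapses the three subscripts $k(n+1)+\alpha$, $kn+\alpha$ to $k(n+1)$, $kn$, yielding exactly the claimed identity $M_{k(n+1)}=2^kM_{kn}+M_k$.

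Alternatively, if one prefers a self-contained argument rather than citing the theorem, I would rerun the same short computation from the Binet form \eqref{binetMersenne}: write $M_{k(n+1)} = 2^{kn+k}-1 = 2^k\cdot 2^{kn} - 2^k + 2^k - 1 = 2^k(2^{kn}-1) + (2^k-1) = 2^k M_{kn} + M_k$. This is just the $\alpha=0$ trace of the display in the proof of Theorem \ref{thm1}, so either route is essentially costless.

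There is no real obstacle here; the only thing worth a moment's attention is bookkeeping on the index set, namely confirming that $\alpha=0$ is permitted (it is) and that the corollary's hypotheses $k\in\mathbb{N}$, $n\ge 0$ match those of Theorem \ref{thm1}, which they do. Accordingly I would present this as a one-line proof: ``Take $\alpha=0$ in Theorem \ref{thm1}.''
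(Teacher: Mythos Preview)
Your proposal is correct and matches the paper's approach: the corollary is stated without proof, immediately after Theorem~\ref{thm1}, as the obvious specialization $\alpha=0$. Your one-line proof ``take $\alpha=0$ in Theorem~\ref{thm1}'' is exactly what is intended.
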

Thus, for given $k$, sub-sequences  $M_{kn}, M_{kn+1},..., M_{kn+(k-1)}$ 
follow the same recurrence relation provided in Theorem \ref{thm1}.	
\begin{example}
	Let $k=3$, then $\alpha=0, 1, 2$. Thus,
	\begin{align*}
		\{M_{3n}\}&=\{0,7,63,511,4095,...\}\\
		\{M_{3n+1}\} &= \{1,15,127,1023,8191,...\} \\
		\{M_{3n+2}\} &= \{3,31,255,2047,16383,...\}.
	\end{align*}  
\end{example}
	\begin{theorem}
		For non negative integers n and m, we have 
		\begin{equation*}
			M_{n+m}=2^nM_{m}+M_n.
		\end{equation*}
	\end{theorem}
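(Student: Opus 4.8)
The plan is to mimic the proof of Theorem \ref{thm1}, which is essentially the special case $m = kn + \alpha$ and $n \mapsto 1$, but now with two independent parameters. The identity $M_{n+m} = 2^n M_m + M_n$ is a clean Vajda-type relation, and the fastest route is to invoke the closed form \eqref{binetMersenne}, namely $M_j = 2^j - 1$ for all $j \geq 0$.

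First I would expand the left-hand side using Binet's formula: $M_{n+m} = 2^{n+m} - 1$. Then the key algebraic manipulation is to write $2^{n+m} - 1 = 2^n 2^m - 2^n + 2^n - 1$, grouping the first two terms as $2^n(2^m - 1)$ and recognizing the remaining $2^n - 1$ as $M_n$. This yields $M_{n+m} = 2^n(2^m - 1) + (2^n - 1) = 2^n M_m + M_n$, which is exactly the claim. The symmetry in $n$ and $m$ is broken by the choice of which factor of $2^n$ to peel off; one could equally well obtain $M_{n+m} = 2^m M_n + M_m$, and it may be worth remarking on this.

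There is essentially no obstacle here: the statement is an immediate consequence of the exponential closed form, and the only "choice" is the bookkeeping of the $\pm 2^n$ term, which is routine. If one wanted a proof that does not lean on Binet's formula — staying purely at the level of the recurrence \eqref{mersenne_recurence} — I would instead fix $n$ and induct on $m$, using the base cases $m = 0$ (where $M_n = 2^0 M_n + M_0 = M_n$ holds trivially since $M_0 = 0$) and $m = 1$ (where $M_{n+1} = 2 M_n + 1 = 2^1 M_n + M_1$ is exactly the alternative recurrence $M_{n+1} = 2M_n + 1$ noted after \eqref{mersenne_recurence}), and then propagating via $M_{n+m+2} = 3M_{n+m+1} - 2M_{n+m}$ together with $M_{m+2} = 3M_{m+1} - 2M_m$. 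But given that the paper has already committed to Binet's formula in the preceding results, the three-line computation is clearly the intended and cleanest argument, and I would present that.
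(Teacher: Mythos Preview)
Your proposal is correct and matches the paper's approach exactly: the paper simply states that the identity follows from Binet's formula \eqref{binetMersenne}, and your three-line computation $2^{n+m}-1 = 2^n(2^m-1) + (2^n-1)$ is precisely that argument spelled out. The inductive alternative you sketch is sound but unnecessary here.
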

	\begin{proof}
	It can be easily proved using the Binet's formula of Mersenne numbers.
	\end{proof}
	\begin{corollary}
		The following identities hold
		\begin{align*}
		M_{2k} &=M_{k} (2^k+1)=M_{k}m_{k}, \\
		M_{3k} &=2^{k}M_{k}m_{k}+M_{k} =M_{k}(2^{k}m_{k}+1). \\
		M_{4k} &= M_km_km_{2k}.
	\end{align*}
	\end{corollary}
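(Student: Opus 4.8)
The plan is to derive all three identities directly from the preceding theorem $M_{n+m}=2^nM_m+M_n$ together with the closed forms $M_n=2^n-1$ and $m_n=2^n+1$ from \eqref{binetMersenne}. The work is essentially a sequence of substitutions, so I would organize it as three short computations, reusing each identity to get the next.

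First, for $M_{2k}$ I would set $n=m=k$ in the theorem, obtaining $M_{2k}=2^kM_k+M_k=M_k(2^k+1)$, and then invoke $m_k=2^k+1$ to rewrite the bracket, giving $M_{2k}=M_km_k$. (As a sanity check one can also see this from Binet directly: $2^{2k}-1=(2^k-1)(2^k+1)$.) Second, for $M_{3k}$ I would apply the theorem with $n=k$ and $m=2k$, so that $M_{3k}=2^kM_{2k}+M_k$; substituting the just-proved value $M_{2k}=M_km_k$ yields $M_{3k}=2^kM_km_k+M_k$, and factoring out $M_k$ gives $M_{3k}=M_k(2^km_k+1)$, which is the claimed form.

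Third, for $M_{4k}$ I would use the doubling identity $M_{2\ell}=M_\ell m_\ell$ twice: with $\ell=2k$ it gives $M_{4k}=M_{2k}m_{2k}$, and then substituting $M_{2k}=M_km_k$ from the first step produces $M_{4k}=M_km_km_{2k}$. This step is the one requiring the most care, since it is the only place where the doubling formula is iterated; alternatively one could verify it purely from Binet by factoring $2^{4k}-1=(2^k-1)(2^k+1)(2^{2k}+1)$, which also serves as an independent check.

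The argument involves no genuine obstacle — it is a routine unwinding of the addition formula — so the only thing to watch is consistent bookkeeping of indices (keeping track of which factor is an $M$ and which is an $m$) and making sure the Mersenne–Lucas closed form $m_n=2^n+1$ is cited each time a factor $2^n+1$ is converted.
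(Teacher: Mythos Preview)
Your argument is correct. The paper only writes out the third identity and does it slightly differently: it applies the addition formula once more with $n=k$, $m=3k$ to get $M_{4k}=2^{k}M_{3k}+M_{k}=M_{k}\bigl[2^{k}(2^{k}m_{k}+1)+1\bigr]$, and then simplifies the bracket algebraically to $m_{k}m_{2k}$. Your route---iterating the doubling identity $M_{2\ell}=M_{\ell}m_{\ell}$ at $\ell=2k$ and then $\ell=k$---is cleaner and avoids that last bit of algebra, since the factorization $M_{4k}=M_{2k}m_{2k}=M_{k}m_{k}m_{2k}$ falls out immediately. The paper's sequential buildup has the (mild) virtue of reusing $M_{3k}$ and showing a uniform pattern $M_{(j+1)k}=2^{k}M_{jk}+M_{k}$, but for this specific identity your doubling argument is the more transparent one.
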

\begin{proof}
	For the third identity,
	\begin{align*}		
		M_{4k}=2^{k}M_{k}(2^{k}m_{k}+1)+M_{k} =M_{k}[2^{k}(2^{k}m_{k}+1)+1]=M_km_km_{2k}.
	\end{align*}
\end{proof}
	\begin{theorem}
		Let $n\geq 0$ and k be a positive integer then $M_k$  divides $ M_{kn}$.
	\end{theorem}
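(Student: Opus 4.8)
The plan is to prove the divisibility $M_k \mid M_{kn}$ by induction on $n$, using the corollary \eqref{use_to_proof} established just above, namely $M_{k(n+1)} = 2^k M_{kn} + M_k$. First I would settle the base case: for $n=0$ we have $M_{k\cdot 0} = M_0 = 0$, which is trivially divisible by $M_k$ (and for $n=1$, $M_{k} \mid M_k$ trivially), so there is nothing to check there.

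For the inductive step, I would assume $M_k \mid M_{kn}$ for some $n \geq 0$, say $M_{kn} = c\, M_k$ for some integer $c$. Then from \eqref{use_to_proof},
\begin{equation*}
	M_{k(n+1)} = 2^k M_{kn} + M_k = 2^k c\, M_k + M_k = (2^k c + 1) M_k,
\end{equation*}
which is again a multiple of $M_k$. This closes the induction and gives $M_k \mid M_{kn}$ for all $n \geq 0$.

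Alternatively, one could give a direct non-inductive argument from the Binet formula: $M_{kn} = 2^{kn} - 1 = (2^k)^n - 1$ and $M_k = 2^k - 1$, and the elementary factorization $x^n - 1 = (x-1)(x^{n-1} + x^{n-2} + \cdots + 1)$ with $x = 2^k$ shows $M_{kn} = M_k \cdot \sum_{j=0}^{n-1} 2^{kj}$. I would likely mention this as a remark since it also exhibits the explicit cofactor. There is no real obstacle here — the only thing to be slightly careful about is stating the base case cleanly (the $n=0$ case with $M_0 = 0$) and making sure the cofactor in the induction, $2^k c + 1$, is recognized as an integer, which is immediate.
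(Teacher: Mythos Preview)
Your proposal is correct. Your primary argument proceeds by induction on $n$ using the corollary $M_{k(n+1)} = 2^k M_{kn} + M_k$, whereas the paper's proof is precisely your ``alternative'' direct argument: it writes $M_{kn} = (2^k)^n - 1$ and applies the factorization $x^n - 1 = (x-1)(x^{n-1} + \cdots + 1)$ with $x = 2^k$, calling the cofactor $X(k,n) = (2^k)^{n-1} + \cdots + 2^k + 1$. The inductive route has the virtue of tying the result to the recursive structure already set up in the paper and avoids appealing to a polynomial identity; the direct factorization, on the other hand, produces the explicit quotient $M_{kn}/M_k$ in one line, which is exactly what is needed for the definition of $M_n^{(k)}$ that follows. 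Since you already include the direct argument as a remark, your write-up effectively subsumes the paper's proof.
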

	\begin{proof}
		If we can show that $ M_{kn}$ is multiple of $ M_{k}$ then we are done.
		Using Binet's formula \eqref{binetMersenne}, we have
		\begin{align*}
			M_{kn}&=2^{kn}-1\\
			& =(2^k)^n-1^n\\
			& =(2^k-1)[(2^k)^{n-1}+(2^k)^{n-2}+...+(2^k)^{n-n+1}+1]\\
			& =M_kX(k,n),\quad\text{where } X(k,n)=(2^k)^{n-1}+(2^k)^{n-2}+... +(2^k)+1.
		\end{align*} 
	Thus, this completes the proof.
	\end{proof} 
\newpage
\subsection{Higher order Mersenne numbers}
Note that, since $M_{kn}$ is divisible by $M_k$, so the ratio  $M_{kn}/M_k$ is an integer and we refer these numbers as the higher order Mersenne numbers. Thus, the following definition.
	\begin{definition}\label{homn_defn}
		For positive integer k, the higher-order Mersenne numbers $ \{M_n^{(k)}\}_{n\geq 0} $ are defined as
		\begin{equation}
			M_n^{(k)}=\frac{M_{kn}}{M_k}, \quad n=0,1,2,....
		\end{equation} 
	\end{definition}
\noindent
For different values of $k$, some higher-order Mersenne numbers are listed in the following table:
	\begin{table}[h!]
		\centering
		\begin{tabular}{|c|c|c|c|c|c|} 
			\hline
			Numbers    &$k=1$&$k=2$&$k=3$&$k=4$&$k=5$\\
			\hline\hline
			$M_0^{(k)}$&0&0&0&0&0\\
			$M_1^{(k)}$&1&1&1&1&1\\ 
			$M_2^{(k)}$&3&5&9&17&33\\
			$M_3^{(k)}$&7&21&73&273&1057\\
			$M_4^{(k)}$&15&85&585&4369&33825\\
			$M_5^{(k)}$&31&341&4681&69905&1082401\\
			\hline
		\end{tabular}
		\label{mersenneTable}
		\caption{List of some higher order Mersenne numbers ($M_n^{(k)}$)}
	\end{table}
\\
From Definition \ref{homn_defn}, note that the following identities hold.
	\begin{enumerate}
		\item $M_0^{(k)}=0$  \quad and \quad  $M_1^{(k)}=1$. 
		\item $M_2^{(k)}=2^k+1$. 
	\end{enumerate} 
We observe that for $k=1$, the sequence is indexed in OEIS as A000225, for $k=2$, indexed in OEIS as A002450 and for $k=3$, indexed in OEIS as A023001.
\begin{theorem}
	The higher order Mersenne numbers $M_n^{(k)}$ satisfy the following recurrence relation
	\begin{equation}\label{recurrence_homn}
		{M_{n+2}^{(k)}=(2^k+1)M_{n+1}^{(k)}-2^kM_n^{(k)}},~~M_0^{(k)}=0, M_1^{(k)}=1.
	\end{equation}
\end{theorem}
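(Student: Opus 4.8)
The plan is to reduce the desired second-order recurrence to the first-order relation that is already available. Dividing the identity~\eqref{use_to_proof}, namely $M_{k(n+1)}=2^kM_{kn}+M_k$, by $M_k$ — which is nonzero for every $k\in\mathbb{N}$ — and using Definition~\ref{homn_defn}, I obtain the first-order relation
\begin{equation*}
	M_{n+1}^{(k)}=2^kM_n^{(k)}+1,\qquad n\geq 0 .
\end{equation*}
Together with $M_0^{(k)}=0$ this already pins down the whole sequence and forces $M_1^{(k)}=1$, matching the stated initial data.

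Next I would write the same relation with $n$ replaced by $n+1$, namely $M_{n+2}^{(k)}=2^kM_{n+1}^{(k)}+1$, and subtract the two equations to cancel the constant term:
\begin{equation*}
	M_{n+2}^{(k)}-M_{n+1}^{(k)}=2^k\bigl(M_{n+1}^{(k)}-M_n^{(k)}\bigr).
\end{equation*}
Rearranging gives $M_{n+2}^{(k)}=(2^k+1)M_{n+1}^{(k)}-2^kM_n^{(k)}$, which is exactly \eqref{recurrence_homn}.

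As an alternative (and a useful cross-check) one can argue straight from the Binet form: putting $q=2^k$, Definition~\ref{homn_defn} together with \eqref{binetMersenne} gives $M_n^{(k)}=(q^n-1)/(q-1)$, a linear combination of the geometric sequences $q^n$ and $1^n$; hence it satisfies the linear recurrence whose characteristic polynomial is $(x-1)(x-q)=x^2-(2^k+1)x+2^k$, again yielding \eqref{recurrence_homn}, with the two initial values confirmed by direct substitution.

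Both routes are entirely elementary, so there is no genuine obstacle here; the only point requiring a word of care is to note explicitly that $M_k\neq 0$, so that the division defining $M_n^{(k)}$ and the passage from the $M_{kn}$-recurrence to the $M_n^{(k)}$-recurrence are legitimate, after which the argument is pure bookkeeping.
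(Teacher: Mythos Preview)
Your proof is correct and follows essentially the same route as the paper: divide \eqref{use_to_proof} by $M_k$ to get the first-order relation $M_{n+1}^{(k)}=2^kM_n^{(k)}+1$, shift the index, and subtract. The additional Binet-form cross-check and the explicit remark that $M_k\neq 0$ are nice touches not spelled out in the paper, but the core argument is identical.
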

	\begin{proof}
		Dividing both sides of \eqref{use_to_proof} with  $M_k$ and then using Definition \ref{homn_defn}, we get
		\begin{equation}\label{a}
			M_{n+1}^{(k)}=2^kM_n^{(k)}+1.
		\end{equation}
		Now replacing $n$ by $n+1$ in \eqref{a} yields 
		\begin{equation}\label{b}
			M_{n+2}^{(k)}=2^kM_{n+1}^{(k)}+1.
		\end{equation}
		Thus, subtracting \eqref{a} from \eqref{b} gives the required result.
	\end{proof}
Now we give the Binet's formula for higher order Mersenne numbers and using the Binet's formula we prove some algebraic identities for this sequence.
\subsection{Binet's formula and some identities}
	For the fixed positive integer $k$, the characteristic equation for \eqref{recurrence_homn} is  $x^2-(2^k+1)x+2^k=0$, whose roots are $2^k$ and 1. 	
	Thus, the Binet's formula for higher order Mersenne numbers $M_n^{(k)}$ is given by
	\begin{equation}\label{binethomn}
		M_n^{(k)}=\frac{(2^k)^{n}-1}{2^k-1}.
	\end{equation}
\begin{theorem}
	The limiting ratio of higher order Mersenne numbers is $2^k$ i.e. 
	\begin{align*}
		\lim_{n\rightarrow \infty}\dfrac{M_{n+1}^{(k)}}{M_n^{(k)}}=2^k.
	\end{align*}
\end{theorem}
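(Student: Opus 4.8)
The plan is to compute the limit directly from the Binet's formula \eqref{binethomn}, which is the most transparent route given that the closed form is already in hand. First I would write the ratio $M_{n+1}^{(k)}/M_n^{(k)}$ as
\begin{equation*}
	\frac{M_{n+1}^{(k)}}{M_n^{(k)}}=\frac{(2^k)^{n+1}-1}{(2^k)^{n}-1},
\end{equation*}
where the common factor $2^k-1$ in numerator and denominator cancels. Then I would factor $(2^k)^n$ out of the top and bottom to obtain
\begin{equation*}
	\frac{M_{n+1}^{(k)}}{M_n^{(k)}}=\frac{2^k-(2^k)^{-n}}{1-(2^k)^{-n}}.
\end{equation*}

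Since $k$ is a fixed positive integer, $2^k\geq 2>1$, so $(2^k)^{-n}\to 0$ as $n\to\infty$. Taking the limit of the last expression and using the algebra of limits (the denominator tends to $1\neq 0$), the right-hand side converges to $2^k/1=2^k$, which is the claim.

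An alternative I would mention briefly is to use the recurrence \eqref{a}, namely $M_{n+1}^{(k)}=2^kM_n^{(k)}+1$, and divide through by $M_n^{(k)}$ to get $M_{n+1}^{(k)}/M_n^{(k)}=2^k+1/M_n^{(k)}$; since $M_n^{(k)}\to\infty$ (being strictly increasing integers for $n\geq 1$), the reciprocal term vanishes and the limit is again $2^k$. There is essentially no obstacle here: the only point needing a word of care is ensuring $M_n^{(k)}\neq 0$ for $n\geq 1$ so the ratio is well defined, which follows immediately from $M_n^{(k)}=\frac{(2^k)^n-1}{2^k-1}\geq 1$ for $n\geq 1$, and noting that $2^k\neq 1$ so no division by zero occurs in the cancellation step.
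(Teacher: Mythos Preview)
Your proof is correct and follows essentially the same route as the paper: both start from the Binet formula \eqref{binethomn}, cancel the $2^k-1$, and normalize the ratio so that a vanishing $(2^k)^{-n}$ (equivalently $(2^k)^{-(n+1)}$) term yields the limit $2^k$. Your added remarks on well-definedness and the alternative via the recurrence $M_{n+1}^{(k)}=2^kM_n^{(k)}+1$ are nice touches not present in the paper.
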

\begin{proof}
	From Binet's formula \eqref{binethomn}, we can write
	\begin{align*}
		\lim_{n\rightarrow \infty}\dfrac{M_{n+1}^{(k)}}{M_n^{(k)}}
		&=\lim_{n\rightarrow \infty}\left(\dfrac{\frac{(2^k)^{n+1}-1}{2^k-1}}{\frac{(2^k)^{n}-1}{2^k-1}}\right)
		\\&=\lim_{n\rightarrow \infty}\left(\dfrac{(2^k)^{n+1}-1}{(2^k)^{n}-1}\right)
		\\&=\lim_{n\rightarrow \infty}\left(\dfrac{1-\frac{1}{(2^k)^{n+1}}}{\frac{1}{2^k}-\frac{1}{(2^k)^{n+1}}}\right).
	\end{align*}
Now taking into account that $\lim_{n\rightarrow \infty} \left(\frac{1}{(2^k)^{n+1}}\right)=0$ as $\vert\frac{1}{2^k}\vert<1$ for $k\in \N$, we have 
$$\lim_{n\rightarrow \infty}\dfrac{M_{n+1}^{(k)}}{M_n^{(k)}}=2^k.$$
\end{proof}
\begin{theorem}[Catalan's identity]\label{thmCatalan}
	For $n\geq r$, we have
	\begin{equation}
		M_{n-r}^{(k)}M_{n+r}^{(k)}-{\Big(M_n^{(k)}\Big)}^2=-(2^k)^{n-r}{\Big(M_{r}^{(k)}\Big)}^2.
	\end{equation}
\end{theorem}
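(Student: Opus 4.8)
The plan is to prove Catalan's identity by direct substitution of the Binet formula \eqref{binethomn}. Writing $q=2^k$ for brevity (so $M_n^{(k)}=\frac{q^n-1}{q-1}$), the left-hand side becomes
\begin{equation*}
\frac{(q^{n-r}-1)(q^{n+r}-1)}{(q-1)^2}-\frac{(q^n-1)^2}{(q-1)^2}=\frac{(q^{n-r}-1)(q^{n+r}-1)-(q^n-1)^2}{(q-1)^2}.
\end{equation*}
So the whole problem reduces to simplifying the numerator.

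First I would expand both products in the numerator. We get $(q^{n-r}-1)(q^{n+r}-1)=q^{2n}-q^{n-r}-q^{n+r}+1$ and $(q^n-1)^2=q^{2n}-2q^n+1$. Subtracting, the $q^{2n}$ and the $+1$ cancel, leaving $2q^n-q^{n-r}-q^{n+r}=-q^{n-r}(q^{r}-1)^2$, since $q^{n-r}(q^{2r}-2q^r+1)=q^{n+r}-2q^n+q^{n-r}$. Dividing by $(q-1)^2$ then yields
\begin{equation*}
M_{n-r}^{(k)}M_{n+r}^{(k)}-\bigl(M_n^{(k)}\bigr)^2=-\frac{q^{n-r}(q^r-1)^2}{(q-1)^2}=-q^{n-r}\left(\frac{q^r-1}{q-1}\right)^2=-(2^k)^{n-r}\bigl(M_r^{(k)}\bigr)^2,
\end{equation*}
which is exactly the claimed identity.

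There is essentially no obstacle here — the argument is a short algebraic manipulation once the Binet formula is invoked, and the only thing to be careful about is bookkeeping the exponents so that the factorization $2q^n-q^{n-r}-q^{n+r}=-q^{n-r}(q^r-1)^2$ is written correctly. One minor point worth a sentence in the write-up: the identity in the theorem is stated as an identity over the integers, and since $M_n^{(k)}=\frac{q^n-1}{q-1}$ is an integer (as established by the divisibility theorem preceding Definition \ref{homn_defn}), clearing denominators is legitimate and no division-by-zero issue arises because $q=2^k\geq 2$. Alternatively, one could avoid fractions entirely by multiplying through by $(q-1)^2=M_k^2$ from the start and proving the polynomial identity $M_k^2\bigl(M_{n-r}^{(k)}M_{n+r}^{(k)}-(M_n^{(k)})^2\bigr)=-q^{n-r}M_k^2(M_r^{(k)})^2$ directly; I would present whichever is cleaner, but the fractional form above is perfectly rigorous.
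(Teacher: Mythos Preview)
Your proof is correct and follows essentially the same approach as the paper: both substitute the Binet formula \eqref{binethomn}, expand the numerator, and recognize the factorization that yields $-(2^k)^{n-r}\bigl(M_r^{(k)}\bigr)^2$. The only cosmetic difference is that you set $q=2^k$ and factor out $q^{n-r}$ directly, whereas the paper keeps $2^k$ explicit and first factors out $(2^k)^n$ before rewriting; neither constitutes a genuinely different method.
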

\begin{proof}
From Binet's formula \eqref{binethomn}, we have
\begin{align*}
	M_{n-r}^{(k)}M_{n+r}^{(k)}-{\Big(M_n^{(k)}\Big)}^2&= \Big(\frac{(2^k)^{n-r}-1}{2^k-1}\Big)\Big(\frac{(2^k)^{n+r}-1}{2^k-1}\Big)-\Big(\frac{(2^k)^{n}-1}{2^k-1}\Big)^{2}\\
	&=\frac{1}{{({2^k-1})}^2}\Big((2^k)^{2n}-(2^k)^{n-r}-(2^k)^{n+r}+1-(2^k)^{2n}+2(2^k)^{n}-1\Big)\\
	&=\frac{1}{{({2^k-1})}^2}\Big(2(2^k)^{n}-(2^k)^{n-r}-(2^k)^{n+r}\Big)\\
	&=\frac{(2^k)^{n}}{{({2^k-1})}^2}\Big(2-(2^k)^{-r}-(2^k)^{r}\Big)\\
	&=\frac{(2^k)^{n}}{{({2^k-1})}^2}\Big(\frac{2(2^k)^{r}-1-(2^k)^{2r}}{(2^k)^{r}}\Big)\\
	&=-\frac{(2^k)^{n-r}}{{({2^k-1})}^2}{\Big((2^k)^{r}-1\Big)}^2\\
	&=-(2^k)^{n-r}{\Big(M_{r}^{(k)}\Big)}^2.
\end{align*}
\end{proof}
\begin{corollary}[Cassini's identity]\label{cassini}
	For positive integer n, we have
	\begin{equation*}
		M_{n-1}^{(k)}M_{n+1}^{(k)}-{\Big(M_n^{(k)}\Big)}^2=-(2^k)^{n-1}.
	\end{equation*}
\end{corollary}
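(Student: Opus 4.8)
The plan is to derive Cassini's identity as an immediate specialization of Catalan's identity (Theorem~\ref{thmCatalan}) by setting $r=1$. First I would recall that Catalan's identity states $M_{n-r}^{(k)}M_{n+r}^{(k)}-\bigl(M_n^{(k)}\bigr)^2=-(2^k)^{n-r}\bigl(M_r^{(k)}\bigr)^2$ for all $n\geq r$. Taking $r=1$ (which is legitimate since $n$ is a positive integer, so $n\geq 1=r$), the left-hand side becomes exactly $M_{n-1}^{(k)}M_{n+1}^{(k)}-\bigl(M_n^{(k)}\bigr)^2$, and the right-hand side becomes $-(2^k)^{n-1}\bigl(M_1^{(k)}\bigr)^2$.

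The only remaining step is to observe that $M_1^{(k)}=1$, a fact already recorded among the basic identities following Definition~\ref{homn_defn} (and visible in the table). Hence $\bigl(M_1^{(k)}\bigr)^2=1$ and the right-hand side collapses to $-(2^k)^{n-1}$, which is precisely the claimed identity.

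Alternatively, for a self-contained argument one could bypass Catalan entirely and substitute directly into the Binet formula \eqref{binethomn}: write $M_{n-1}^{(k)}=\frac{(2^k)^{n-1}-1}{2^k-1}$, $M_{n+1}^{(k)}=\frac{(2^k)^{n+1}-1}{2^k-1}$, $M_n^{(k)}=\frac{(2^k)^{n}-1}{2^k-1}$, expand the products over the common denominator $(2^k-1)^2$, and note that the $(2^k)^{2n}$ and constant terms cancel while the cross terms telescope to $-(2^k)^{n-1}(2^k-1)^2$, giving the result after dividing. This mirrors the computation in the proof of Theorem~\ref{thmCatalan}.

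There is essentially no obstacle here: the statement is a direct corollary, and the main (trivial) point to get right is simply checking that the index constraint $n\geq r$ is satisfied when $r=1$ and $n\geq 1$, and that $M_1^{(k)}=1$. I would present the one-line proof via Catalan's identity, since the corollary is explicitly labelled as following from it.
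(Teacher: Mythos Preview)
Your proposal is correct and matches the paper's approach exactly: the paper states Cassini's identity as a corollary immediately after Catalan's identity with no separate proof, so the intended argument is precisely the specialization $r=1$ together with $M_1^{(k)}=1$. Your alternative direct Binet computation is also fine but unnecessary here.
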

\begin{theorem}[d'Ocagane identity]
For $m>n$, we have
\begin{equation*}
	M_{n+1}^{(k)}M_{m}^{(k)}-M_n^{(k)}M_{m+1}^{(k)}=(2^k)^{n} M_{m-n}^{(k)}.
\end{equation*}
\end{theorem}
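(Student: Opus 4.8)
The plan is to prove this directly from Binet's formula \eqref{binethomn}, in the same spirit as the proof of Catalan's identity above. Substituting $M_j^{(k)}=\frac{(2^k)^{j}-1}{2^k-1}$ into the left-hand side gives
\[
M_{n+1}^{(k)}M_{m}^{(k)}-M_n^{(k)}M_{m+1}^{(k)}=\frac{\big((2^k)^{n+1}-1\big)\big((2^k)^{m}-1\big)-\big((2^k)^{n}-1\big)\big((2^k)^{m+1}-1\big)}{(2^k-1)^2}.
\]
Expanding both products in the numerator, the top-degree terms $(2^k)^{n+m+1}$ and the constant terms $+1$ cancel, leaving $(2^k)^{n}-(2^k)^{n+1}-(2^k)^{m}+(2^k)^{m+1}=(2^k-1)\big((2^k)^{m}-(2^k)^{n}\big)$. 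One factor of $2^k-1$ then cancels with the denominator, and factoring $(2^k)^{n}$ out of $(2^k)^{m}-(2^k)^{n}$ yields $(2^k)^{n}\cdot\frac{(2^k)^{m-n}-1}{2^k-1}=(2^k)^{n}M_{m-n}^{(k)}$, which is the claimed identity. Since $m>n$ we have $m-n\geq 1$, so $M_{m-n}^{(k)}$ is a genuine term of the sequence.

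It is also worth recording an essentially computation-free alternative. Using \eqref{a}, namely $M_{n+1}^{(k)}=2^{k}M_{n}^{(k)}+1$, the left-hand side becomes $(2^{k}M_{n}^{(k)}+1)M_{m}^{(k)}-M_{n}^{(k)}(2^{k}M_{m}^{(k)}+1)=M_{m}^{(k)}-M_{n}^{(k)}$, so it remains only to show $M_{m}^{(k)}-M_{n}^{(k)}=(2^{k})^{n}M_{m-n}^{(k)}$. This last identity is the higher-order analogue of the addition formula $M_{n+m}=2^{n}M_{m}+M_{n}$ proved earlier: writing $km=k n+k(m-n)$, dividing $M_{km}=2^{kn}M_{k(m-n)}+M_{kn}$ through by $M_{k}$, and applying Definition \ref{homn_defn} gives $M_{m}^{(k)}=(2^{k})^{n}M_{m-n}^{(k)}+M_{n}^{(k)}$, which rearranges to exactly what is needed.

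There is no genuine obstacle here; both routes are short. The only points requiring care are the sign and exponent bookkeeping when expanding the two products in the first approach, and keeping the index $m-n$ nonnegative. I would present the direct Binet computation as the main proof, to match the style of the surrounding Catalan/Cassini identities, and possibly note the shortcut via \eqref{a} in a remark.
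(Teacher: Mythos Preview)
Your main proof is correct and is exactly the approach the paper intends: the paper's own proof simply reads ``The proof is very similar to Theorem~\ref{thmCatalan},'' i.e., a direct Binet-formula computation, which is precisely what you carry out. Your alternative via \eqref{a} is also correct and slightly slicker than anything in the paper, but it goes beyond what is asked.
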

\begin{proof}
	The proof is very similar to Theorem \ref{thmCatalan}.
\end{proof}
\begin{theorem}[Vajda's identity]
	For every $n,m,r\geq0$, we have
	\begin{equation*}
		M_{n+m}^{(k)}M_{n+r}^{(k)}-M_n^{(k)}M_{n+m+r}^{(k)}=-(2^k)^{n}\Big(\frac{(2^k)^{r}+1}{2^k-1}\Big)M_m^{(k)}.
	\end{equation*}
\end{theorem}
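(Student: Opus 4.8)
The plan is to prove the identity directly from the Binet formula \eqref{binethomn}, in the same spirit as the proof of Theorem~\ref{thmCatalan}. Writing $q=2^k$ for brevity, I would substitute $M_j^{(k)}=(q^{j}-1)/(q-1)$ into each of the four factors on the left-hand side and clear the common denominator $(q-1)^2$. The entire problem then reduces to simplifying the single numerator
\[
(q^{n+m}-1)(q^{n+r}-1)-(q^{n}-1)(q^{n+m+r}-1).
\]

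Next I would expand both products. The top-degree terms $q^{2n+m+r}$ and the constant terms cancel in pairs, leaving $q^{n}\bigl(q^{m+r}-q^{m}-q^{r}+1\bigr)$. The key step is to recognise the factorisation $q^{m+r}-q^{m}-q^{r}+1=(q^{m}-1)(q^{r}-1)$, so that the numerator becomes $q^{n}(q^{m}-1)(q^{r}-1)$. Dividing by $(q-1)^2$ and using $(q^{m}-1)/(q-1)=M_m^{(k)}$ together with $(q^{r}-1)/(q-1)=M_r^{(k)}$ collapses the expression to the clean symmetric product $q^{n}M_m^{(k)}M_r^{(k)}$.

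The main obstacle is reconciling this outcome with the right-hand side as displayed. The Binet computation yields
\[
M_{n+m}^{(k)}M_{n+r}^{(k)}-M_n^{(k)}M_{n+m+r}^{(k)}=(2^k)^{n}M_m^{(k)}M_r^{(k)}=(2^k)^{n}\,\frac{(2^k)^{r}-1}{2^k-1}\,M_m^{(k)},
\]
that is, with an overall plus sign and with $(2^k)^{r}-1$ in the numerator. This is exactly the form one expects by analogy with the classical Vajda identity $F_{n+m}F_{n+r}-F_nF_{n+m+r}=(-1)^{n}F_mF_r$, the factor $(2^k)^{n}$ playing the role of $(-1)^{n}$ because here both characteristic roots, $2^k$ and $1$, are positive. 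It does not match the displayed right-hand side, which carries a minus sign and $(2^k)^{r}+1$. A quick numerical check with $k=n=m=r=1$ (so $2^k=2$) gives left-hand side $M_2^{(k)}M_2^{(k)}-M_1^{(k)}M_3^{(k)}=9-7=2$, which agrees with the corrected expression but not with the displayed one (the latter evaluating to $-6$). I therefore expect the Binet method to establish the corrected identity, and I would read the displayed sign together with the ``$+1$'' in the numerator as typographical slips.
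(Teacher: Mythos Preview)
Your approach via the Binet formula is exactly what the paper intends: its own proof is simply the one-line remark ``very similar to Theorem~\ref{thmCatalan}''. Your computation is correct, and you are right that the displayed right-hand side contains typographical slips; the Binet substitution indeed yields
\[
M_{n+m}^{(k)}M_{n+r}^{(k)}-M_n^{(k)}M_{n+m+r}^{(k)}=(2^k)^{n}M_m^{(k)}M_r^{(k)}=(2^k)^{n}\,\frac{(2^k)^{r}-1}{2^k-1}\,M_m^{(k)},
\]
with a plus sign and $(2^k)^r-1$ in the numerator, as your numerical check confirms.
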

\begin{proof}
	The proof is very similar to Theorem \ref{thmCatalan}.
\end{proof}
\begin{theorem}[Honsberger's identity]
	For any integer $p>0$, we have
	\begin{align*}
		M_{p-1}^{(k)}M_{n}^{(k)}+M_{p}^{(k)}M_{n+1}^{(k)} &=\frac{(2^k)^{p+n-1}m_{2k}-\big[(2^k)^{p-1}+(2^k)^{n}\big]m_{k}+2}{{({2^k-1})}^2} .
	\end{align*}
\end{theorem}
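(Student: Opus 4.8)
The plan is to mimic the computation carried out in Theorem~\ref{thmCatalan}: substitute the Binet formula \eqref{binethomn} into the left-hand side, clear the common denominator $(2^k-1)^2$, expand the resulting sum of products of binomials, and then recognize the grouping of powers of $2^k$ that produces the Mersenne--Lucas numbers $m_k=2^k+1$ and $m_{2k}=2^{2k}+1$ appearing on the right.

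Concretely, writing $q=2^k$ for brevity, I would first record $M_j^{(k)}=\dfrac{q^j-1}{q-1}$, $m_k=q+1$, and $m_{2k}=q^2+1$. Then
\[
M_{p-1}^{(k)}M_n^{(k)}+M_p^{(k)}M_{n+1}^{(k)}
=\frac{(q^{p-1}-1)(q^n-1)+(q^p-1)(q^{n+1}-1)}{(q-1)^2}.
\]
Expanding the numerator gives $q^{p+n-1}+q^{p+n+1}-q^{p-1}-q^p-q^n-q^{n+1}+2$, which I would regroup as
\[
q^{p+n-1}(q^2+1)-(q^{p-1}+q^n)(q+1)+2.
\]
Substituting back $q^2+1=m_{2k}$, $q+1=m_k$, and finally $q=2^k$ yields exactly the stated right-hand side.

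There is no serious obstacle here; the only point requiring a little care is spotting the right way to collect the six middle monomials so that the factors $2^k+1$ and $2^{2k}+1$ emerge cleanly---in particular noticing that $q^{p-1}+q^p=q^{p-1}(q+1)$ and $q^n+q^{n+1}=q^n(q+1)$ share the common factor $m_k$, while $q^{p+n-1}+q^{p+n+1}=q^{p+n-1}(q^2+1)$ carries $m_{2k}$. As an alternative, one could instead argue by induction on $p$ using the recurrence \eqref{recurrence_homn} together with the identities already established, but the direct Binet computation is shorter and essentially self-checking.
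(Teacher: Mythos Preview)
Your proposal is correct and follows essentially the same route as the paper's own proof: substitute the Binet formula, expand over the common denominator $(2^k-1)^2$, and regroup the resulting powers of $2^k$ so that the factors $2^{2k}+1=m_{2k}$ and $2^k+1=m_k$ appear. The only cosmetic difference is your use of the abbreviation $q=2^k$, which if anything makes the regrouping step a bit more transparent.
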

\begin{proof} From the Binet's formula \eqref{binethomn}, we have
	\begin{align*}
		M_{p-1}^{(k)}M_{n}^{(k)}&+M_p^{(k)}M_{n+1}^{(k)}
		\\&=\Big(\frac{(2^k)^{p-1}-1}{2^k-1}\Big)\Big(\frac{(2^k)^{n}-1}{2^k-1}\Big)+\Big(\frac{(2^k)^{p}-1}{2^k-1}\Big)\Big(\frac{(2^k)^{n+1}-1}{2^k-1}\Big)\\
		&=\frac{1}{{({2^k-1})}^2}\Big((2^k)^{p+n-1}-(2^k)^{p-1}-(2^k)^{n}+1
		+(2^k)^{p+n+1}-(2^k)^{p}-(2^k)^{n+1}+1\Big)\\
		&=\frac{1}{{({2^k-1})}^2}\Big((2^k)^{p+n-1}-(2^k)^{p-1}-(2^k)^{n}
		+(2^k)^{p+n+1}-(2^k)^{p}+(2^k)^{n+1}+2\Big)\\
		&=\frac{1}{{({2^k-1})}^2}\Big((2^k)^{p+n-1}(1+2^{2k})-(2^k)^{p-1}(1+2^{k})-(2^k)^{n}(1+2^{k})+2\Big)\\
		&=\frac{1}{{({2^k-1})}^2}\Big[(2^k)^{p+n-1}m_{2k}-\Big((2^k)^{p-1}+(2^k)^{n}\Big)m_{k}+2\Big]
	\end{align*}
\end{proof}

\begin{theorem}[Generating function]\label{thmGenfun}
	For higher order Mersenne sequence, we have 
	$$ G(x,k)=\frac{x}{(x-1)(2^kx-1)}.$$
\end{theorem}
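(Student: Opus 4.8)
The plan is to work with the ordinary generating function $G(x,k)=\sum_{n\geq 0}M_n^{(k)}x^n$ and extract a closed form from the linear recurrence \eqref{recurrence_homn}. First I would write down the shifted series $xG(x,k)=\sum_{n\geq 0}M_n^{(k)}x^{n+1}$ and $x^2G(x,k)=\sum_{n\geq 0}M_n^{(k)}x^{n+2}$, and form the combination $G(x,k)-(2^k+1)xG(x,k)+2^kx^2G(x,k)$. After re-indexing so that every sum runs over the same power $x^{n+2}$, the identity $M_{n+2}^{(k)}-(2^k+1)M_{n+1}^{(k)}+2^kM_n^{(k)}=0$ annihilates all terms with $n\geq 0$, leaving only the low-order boundary contributions coming from the shifts. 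Using $M_0^{(k)}=0$ and $M_1^{(k)}=1$ these collapse to the single term $x$, so that $\big(1-(2^k+1)x+2^kx^2\big)G(x,k)=x$.

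Second, I would factor the quadratic denominator. Since the characteristic equation of \eqref{recurrence_homn} was already found to have roots $2^k$ and $1$ (just before \eqref{binethomn}), the polynomial $1-(2^k+1)x+2^kx^2$ factors as $(1-x)(1-2^kx)$. Hence $G(x,k)=\dfrac{x}{(1-x)(1-2^kx)}$, and multiplying each linear factor in the denominator by $-1$ rewrites this as $\dfrac{x}{(x-1)(2^kx-1)}$, which is exactly the claimed expression.

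An alternative and slightly shorter route is to insert the Binet formula \eqref{binethomn} directly: $G(x,k)=\frac{1}{2^k-1}\sum_{n\geq 0}\big((2^kx)^n-x^n\big)=\frac{1}{2^k-1}\Big(\frac{1}{1-2^kx}-\frac{1}{1-x}\Big)$, and combining the two fractions over the common denominator $(1-x)(1-2^kx)$ the numerator becomes $(1-x)-(1-2^kx)=(2^k-1)x$, so the factor $2^k-1$ cancels and the same closed form drops out. I would present the recurrence-based derivation as the main proof, since it does not presuppose the Binet formula, and note the Binet computation as a quick alternative.

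There is essentially no genuine obstacle here; the only point requiring a little care is the bookkeeping of the finitely many low-order terms when telescoping the recurrence into the generating-function equation. If one wishes to be pedantic, one can add a sentence clarifying that the identity is understood as an equality of formal power series (equivalently, valid for $|x|<2^{-k}$ if analytic convergence is preferred).
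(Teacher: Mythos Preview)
Your argument is correct and essentially the same as the paper's: both derive the functional equation $\big(1-(2^k+1)x+2^kx^2\big)G(x,k)=x$ from the recurrence \eqref{recurrence_homn} together with the initial values $M_0^{(k)}=0$, $M_1^{(k)}=1$, and then factor the quadratic. Your additional remark on the Binet-formula shortcut is a nice supplement not present in the paper.
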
 
\begin{proof}
	Let $G(x,k)=\sum_{n=0}^{\infty}M_{n}^{(k)}x^n$ be the generating function for higher order Mersenne sequence.
	Now multiplying both sides of \eqref{recurrence_homn} by $x^n$ and then taking summation over $ 0 $ to $\infty$, we get 
	\begin{align}\label{gentemp1}
		\sum_{n=0}^{\infty}{M_{n+2}^{(k)}x^n-(2^k+1)\sum_{n=0}^{\infty}M_{n+1}^{(k)}x^n+2^k\sum_{n=0}^{\infty}M_n^{(k)}}x^n=0.
	\end{align}
Since,
\begin{align*}
	\sum_{n=0}^{\infty}M_{n+2}^{(k)}x^n &=\frac{1}{x^2}\Big[G(x,k)-M_{1}^{(k)}x -M_{0}^{(k)}\Big] \\
	\text{and}\qquad \sum_{n=0}^{\infty}M_{n+1}^{(k)}x^n & =\frac{1}{x}\Big[G(x,k)-M_{0}^{(k)}\Big].
\end{align*}  
Thus, from \eqref{gentemp1}, we have
		\begin{align*}
			&\frac{1}{x^2}\Big[G(x,k)-M_{1}^{(k)}x-M_{0}^{(k)}\Big]-(2^k+1)\frac{1}{x}\Big[G(x,k)-M_{0}^{(k)}\Big]+2^kG(x,k)=0\\
			&\implies\frac{1}{x^2}\Big[G(x,k)-x-0\Big]-(2^k+1)\frac{1}{x}\Big[G(x,k)-0\Big]+2^kG(x,k)=0\\
			&\implies\Big[\frac{1}{x^2}-\frac{2^k+1}{x}+2^k\Big]G(x,k)
			=\frac{1}{x}\\
			&\implies G(x,k)=\frac{x}{1-(2^k+1)x+2^kx^2}\\
			&\implies G(x,k)=\frac{x}{(x-1)(2^kx-1)}.	
		\end{align*}
	\end{proof}
	For instance, setting $k=1$ in the above theorem gives the generating function for Mersenne sequence i.e. $$G(x,1)=\frac{x}{1-3x+2x^2}.$$
\begin{theorem}%
	The exponential generating function $E(x,k)$ for higher order Mersenne numbers is
	$$  E(x,k)=\frac{{e}^{2^kx}-{e}^x}{2^k-1}.$$
\end{theorem}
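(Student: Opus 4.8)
The plan is to obtain the exponential generating function directly from the Binet formula \eqref{binethomn}, exactly as one does for the ordinary generating function but with the roles of geometric and exponential series interchanged. Recall that the exponential generating function is defined by $E(x,k)=\sum_{n=0}^{\infty}M_n^{(k)}\frac{x^n}{n!}$, so the whole computation hinges on substituting the closed form $M_n^{(k)}=\frac{(2^k)^n-1}{2^k-1}$ into this sum.

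First I would write
\begin{align*}
E(x,k)=\sum_{n=0}^{\infty}M_n^{(k)}\frac{x^n}{n!}
=\sum_{n=0}^{\infty}\frac{(2^k)^n-1}{2^k-1}\cdot\frac{x^n}{n!}
=\frac{1}{2^k-1}\left(\sum_{n=0}^{\infty}\frac{(2^kx)^n}{n!}-\sum_{n=0}^{\infty}\frac{x^n}{n!}\right).
\end{align*}
Then I would recognize each of the two inner sums as the Maclaurin expansion of the exponential function, namely $\sum_{n=0}^{\infty}\frac{(2^kx)^n}{n!}=e^{2^kx}$ and $\sum_{n=0}^{\infty}\frac{x^n}{n!}=e^{x}$. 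Substituting these identifications yields $E(x,k)=\dfrac{e^{2^kx}-e^{x}}{2^k-1}$, which is the claimed formula.

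There is essentially no obstacle here: the factor $\frac{1}{2^k-1}$ is a constant for fixed $k$ and pulls out of the sum, the linearity of the summation splits the difference of the two geometric-in-$n$ terms, and the resulting series both converge for all real (indeed complex) $x$, so the term-by-term manipulation is fully justified. If anything, the only point worth a remark is that we are implicitly using $k\in\N$ so that $2^k-1\neq 0$ and the division is legitimate; this is already part of the standing hypothesis since $M_k=2^k-1$ appears in the denominator of Definition \ref{homn_defn}. As a sanity check one may set $k=1$ to recover the exponential generating function $e^{2x}-e^{x}$ of the ordinary Mersenne numbers.
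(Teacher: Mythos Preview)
Your proof is correct and follows essentially the same route as the paper: substitute the Binet formula \eqref{binethomn} into the defining series for $E(x,k)$, split the sum into two exponential Maclaurin series, and simplify. The additional remarks on convergence and on $2^k-1\neq 0$ are sound but not needed for the argument to go through.
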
 
\begin{proof}
	Using Binet's formula \eqref{binethomn}, we have
	\begin{align*}
		E(x,k) &= \sum_{n=0}^{\infty}M_n^{(k)}\frac{x^n}{n!}\\
		&=\sum_{n=0}^{\infty}\Big[\frac{(2^k)^{n}-1}{2^k-1}\Big]\frac{x^n}{n!}\\
		&=\frac{1}{2^k-1}\Big[\sum_{n=0}^{\infty}\frac{{(2^kx)}^n}{n!}-\sum_{n=0}^{\infty}\frac{x^n}{n!}\Big]\\
		&=\frac{{e}^{2^kx}-{e}^x}{2^k-1}.
	\end{align*}
\end{proof}
\begin{theorem}\label{relation}
	Let $M_n^{(k)}$, $J_n^{(k)}$ and $j_n^{(k)}$ are the nth higher order Mersenne, Jacobsthal and Jacobsthal-Lucas numbers, respectively, then 
	\begin{align*}
		&1. ~for~even~n,~we~have \quad M_n^{(k)}=  
		\begin{cases}
			J_n^{(k)} & : \text{ if~ k~is~even}\\
			\Big(\frac{2^k+1}{2^k-1}\Big)J_n^{(k)}&:\text{ if k~is~odd}
		\end{cases}.\\
		&2. ~for~odd~n,~we~have \quad M_n^{(k)}=  
		\begin{cases}
			J_n^{(k)}& : \text{ if k~is~even}\\
			j_n^{(k)}& : \text{ if k is odd}
		\end{cases}.\\
		&3. ~M_n^{(2k)}=J_n^{(k)}j_n^{(k)}.
	\end{align*}
\end{theorem}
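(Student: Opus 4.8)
The plan is to push everything through Binet's formulas and then split into cases on the parity of $k$, and, inside the odd-$k$ case, on the parity of $n$. By \eqref{binethomn} we have $M_n^{(k)}=\dfrac{(2^k)^n-1}{2^k-1}$, while the higher order Jacobsthal and Jacobsthal--Lucas numbers (being built from the same characteristic roots $2^k$ and $(-1)^k$) have Binet representations
\[
J_n^{(k)}=\frac{(2^k)^n-(-1)^{kn}}{2^k-(-1)^k},\qquad j_n^{(k)}=\frac{(2^k)^n+(-1)^{kn}}{2^k+(-1)^k}.
\]
The only data that change with the parities are the sign $(-1)^k$ and the power $(-1)^{kn}=\big((-1)^k\big)^n$, so once these are fixed each assertion becomes a one-line substitution.

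First I would handle $k$ even. Then $(-1)^k=1$, hence $(-1)^{kn}=1$ for every $n$, and the formula for $J_n^{(k)}$ collapses to $\dfrac{(2^k)^n-1}{2^k-1}=M_n^{(k)}$. This disposes of the "$k$ even" branch of both part (1) and part (2) at once, with no reference to the parity of $n$.

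Now assume $k$ odd, so $(-1)^k=-1$, $2^k-(-1)^k=2^k+1$, $2^k+(-1)^k=2^k-1$, and $(-1)^{kn}=(-1)^n$. If $n$ is even then $(-1)^n=1$, so $J_n^{(k)}=\dfrac{(2^k)^n-1}{2^k+1}$, and multiplying by $\dfrac{2^k+1}{2^k-1}$ gives $\dfrac{(2^k)^n-1}{2^k-1}=M_n^{(k)}$, which is part (1). If $n$ is odd then $(-1)^n=-1$, so $j_n^{(k)}=\dfrac{(2^k)^n-1}{2^k-1}=M_n^{(k)}$ directly, which is part (2). For part (3) I would simply multiply the two Binet formulas: since $\big((-1)^k\big)^2=1$, the numerators multiply to $(2^k)^{2n}-1$ and the denominators to $(2^k)^2-1$, so
\[
J_n^{(k)}j_n^{(k)}=\frac{(2^k)^{2n}-1}{(2^k)^2-1}=\frac{(2^{2k})^n-1}{2^{2k}-1}=M_n^{(2k)},
\]
again by \eqref{binethomn}; note this case needs no split on parities at all.

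There is no genuine obstacle here beyond careful bookkeeping. The one point deserving attention is to make the sign conventions in the Binet representations of $J_n^{(k)}$ and $j_n^{(k)}$ fully explicit — in particular that $2^k-(-1)^k$ equals $2^k-1$ for $k$ even and $2^k+1$ for $k$ odd, and symmetrically for $2^k+(-1)^k$ — after which every one of the sub-cases is an immediate simplification of the form $\dfrac{(2^k)^n\pm1}{2^k\pm1}$.
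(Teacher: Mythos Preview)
Your proof is correct and follows essentially the same approach as the paper: both arguments substitute the Binet formulas \eqref{binethomn}, \eqref{binethojn}, \eqref{binethojln} and split on the parity of $k$ (and of $n$ when $k$ is odd). Your write-up is in fact more explicit than the paper's, which only spells out part (1) and leaves parts (2) and (3) to the reader; in particular your difference-of-squares computation for part (3) is exactly the intended argument.
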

\begin{proof}
	Note that the Binet's formula for higher order Jacobsthal number is  \begin{align}\label{binethojn}
		J_n^{(k)}= \frac{(2^k)^n-\Big((-1)^k\Big)^n}{2^k-(-1)^k}.
	\end{align}
	In case $n$ and $k$ both are even then $J_n^{(k)}=\frac{(2^k)^n-1}{2^k-1}=M_n^{(k)}.$ 
	\\ And, if $n$ is even and $k$ is odd then 
	\begin{align*}
		J_n^{(k)}=\frac{(2^k)^n-1}{2^k+1}=M_n^{(k)}\Big(\frac{2^k-1}{2^k+1}\Big).
	\end{align*}
	This completes the proof.\\
	Similarly, other results can be easily proved using Binet's formula \eqref{binethomn}, \eqref{binethojn} and 
	\begin{align}\label{binethojln}
		j_n^{(k)}=\frac{(2^k)^n+\Big((-1)^k\Big)^n}{2^k+(-1)^k} .
	\end{align}
\end{proof}
\begin{theorem}
	Let $M_n^{(k)}$ and $J_n^{(k)}$ are the nth higher order Mersenne and Jacobsthal numbers, respectively, then we have
	\begin{align*}
		1.& ~(M_n^{(k)})^2=\frac{1}{2^k-1}\Big(M_{2n}^{(k)}-2M_{n}^{(k)}\Big).\\
		2.&~\text{If $n\geq 0$ be an even integer, then}~ M_n^{(k)}+J_n^{(k)}=
		\begin{cases} 
			2J_n^{(k)} & :\text{ if k~is~even},\\
			\Big(\frac{2^{k+1}}{2^k-1}\Big)J_n^{(k)}& :\text{ if k is odd}.
		\end{cases}\\
		3.&~\text{If $n\geq 0$ be an odd integer, then}~ M_n^{(k)}+J_n^{(k)}=
		\begin{cases}
			2J_n^{(k)} & :\text{ if k~is~even},\\
			2\Big(\frac{(2^k)^{n+1}-1}{4^k-1}\Big)& :\text{ if k~is~odd}.
		\end{cases} \\
		4.&~\text{If $n\geq 0$ be an even integer, then}~ M_n^{(k)}J_n^{(k)}=
		\begin{cases}
			(J_n^{(k)})^2 & :\text{ if k~is~even},\\
			\Big(\frac{2^k+1}{2^k-1}\Big)(J_n^{(k)})^2& :\text{ if k~is~odd}.
		\end{cases}\\
		5.&~\text{If $n\geq 0$ be an odd integer, then}~ 
		M_n^{(k)}J_n^{(k)}=
		\begin{cases}
			(J_n^{(k)})^2& :\text{ if k~is~even},\\
			M_n^{(2k)}& :\text{ if k~is~odd}.
		\end{cases} 
	\end{align*} 
\end{theorem}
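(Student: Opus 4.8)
The plan is to dispatch identity~1 by a direct Binet computation, and then obtain identities 2--5 as short algebraic consequences of Theorem~\ref{relation} together with the Binet formulas \eqref{binethomn}, \eqref{binethojn}, \eqref{binethojln}. No induction or generating-function machinery is needed; the whole proof is a case analysis on the parities of $n$ and $k$.

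For identity~1 I would substitute \eqref{binethomn} on both sides. On the left,
\[
\big(M_n^{(k)}\big)^2=\left(\frac{(2^k)^n-1}{2^k-1}\right)^2=\frac{(2^k)^{2n}-2(2^k)^n+1}{(2^k-1)^2},
\]
while on the right
\[
M_{2n}^{(k)}-2M_n^{(k)}=\frac{(2^k)^{2n}-1}{2^k-1}-2\cdot\frac{(2^k)^n-1}{2^k-1}=\frac{(2^k)^{2n}-2(2^k)^n+1}{2^k-1},
\]
and dividing the last expression by $2^k-1$ reproduces the left-hand side. This step is routine.

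For identities 2 and 4 I would quote Theorem~\ref{relation} directly. For even $n$ and even $k$ it gives $M_n^{(k)}=J_n^{(k)}$, hence $M_n^{(k)}+J_n^{(k)}=2J_n^{(k)}$ and $M_n^{(k)}J_n^{(k)}=\big(J_n^{(k)}\big)^2$; for even $n$ and odd $k$ it gives $M_n^{(k)}=\big(\tfrac{2^k+1}{2^k-1}\big)J_n^{(k)}$, so the sum is $\big(1+\tfrac{2^k+1}{2^k-1}\big)J_n^{(k)}=\tfrac{2^{k+1}}{2^k-1}J_n^{(k)}$ and the product is $\big(\tfrac{2^k+1}{2^k-1}\big)\big(J_n^{(k)}\big)^2$. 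For identities 3 and 5 with odd $n$ and even $k$, Theorem~\ref{relation} again gives $M_n^{(k)}=J_n^{(k)}$, so the two cases are immediate. For identity~5 with odd $n$ and odd $k$, Theorem~\ref{relation} gives $M_n^{(k)}=j_n^{(k)}$, and combining this with the relation $M_n^{(2k)}=J_n^{(k)}j_n^{(k)}$ (also from Theorem~\ref{relation}) yields $M_n^{(k)}J_n^{(k)}=j_n^{(k)}J_n^{(k)}=M_n^{(2k)}$ at once.

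The only place calling for a computation is the odd-$n$, odd-$k$ case of identity~3. There $M_n^{(k)}=j_n^{(k)}$, and since $k$ is odd we have $(-1)^k=-1$, and since $n$ is odd we have $(-1)^n=-1$, so \eqref{binethojn} and \eqref{binethojln} specialise to $J_n^{(k)}=\tfrac{(2^k)^n+1}{2^k+1}$ and $j_n^{(k)}=\tfrac{(2^k)^n-1}{2^k-1}$. Adding these two fractions over the common denominator $(2^k+1)(2^k-1)=4^k-1$ and collecting terms gives numerator $2\big((2^k)^{n+1}-1\big)$, so $M_n^{(k)}+J_n^{(k)}=j_n^{(k)}+J_n^{(k)}=2\big(\tfrac{(2^k)^{n+1}-1}{4^k-1}\big)$, as claimed. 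I expect this last sign-bookkeeping — tracking the two independent parity reductions of $(-1)^k$ and $(-1)^n$ — to be the only mild obstacle; everything else is direct substitution into Theorem~\ref{relation}.
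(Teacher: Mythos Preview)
Your proposal is correct and follows essentially the same approach as the paper: a direct Binet computation for identity~1, and an appeal to Theorem~\ref{relation} together with the Binet formulas \eqref{binethomn}, \eqref{binethojn}, \eqref{binethojln} for identities 2--5. In fact you supply more detail than the paper, which dispatches 2--5 with a single sentence pointing to those same ingredients.
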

\begin{proof}
	1. Using Binet's formula \eqref{binethomn}, we have
	\begin{align*}
		(M_n^{(k)})^2&=\Big(\frac{(2^k)^n-1}{2^k-1}\Big)^2\\
		&=\frac{(2^k)^{2n}-2(2^k)^n+1}{(2^k-1)^2}\\
		&=\frac{1}{2^k-1}\Big[\frac{(2^k)^{2n}-1-2\Big((2^k)^n-1\Big)}{2^k-1}\Big]\\
		&=\frac{1}{2^k-1}\Big(M_{2n}^{(k)}-2M_{n}^{(k)}\Big).
	\end{align*}
	In a similar way, identities 2, 3, 4 and 5 can be proved using Binet's formulae \eqref{binethojn}, \eqref{binethojln}, \eqref{binethomn} and Theorem \ref{relation}.
\end{proof}

\subsection{Partial sum and Binomial sum}
\begin{theorem}[Partial sum]
	For $k,n\in \mathbb{N}$, we have
	\begin{equation*}
		\sum_{i=1}^{n}M_i^{(k)}=\frac{2^kM_n^{(k)}-n}{M_k}.
	\end{equation*}
	\begin{proof}
		Using the Binet's formula of higher order Mersenne number, we have
		\begin{align*}
			\sum_{i=1}^{n}M_i^{(k)}&=\frac{2^k-1}{M_k}+\frac{{(2^k)}^2-1}{M_k}+...+\frac{{(2^k)}^n-1}{M_k}\\
			&=\frac{1}{M_k}\Big(2^k+{(2^k)}^2+...+{(2^k)}^n-n\Big)\\
			&=\frac{1}{M_k}\Big(2^k\frac{{(2^k)}^n-1}{2^k-1}-n\Big)\\
			&=\frac{1}{M_k}\Big(2^kM_n^{(k)}-n\Big).
		\end{align*}
	\end{proof}
\end{theorem}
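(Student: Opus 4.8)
The plan is to expand each summand with the Binet's formula \eqref{binethomn} and reduce the claim to summing a finite geometric progression. Writing $M_i^{(k)}=\dfrac{(2^k)^i-1}{2^k-1}$ and recalling from \eqref{binetMersenne} that $M_k=2^k-1$, the left-hand side becomes
\[
\sum_{i=1}^{n}M_i^{(k)}=\frac{1}{M_k}\sum_{i=1}^{n}\big((2^k)^i-1\big)=\frac{1}{M_k}\left(\sum_{i=1}^{n}(2^k)^i-n\right),
\]
so everything hinges on the inner geometric sum. Since $2^k-1\ge 1$ for every $k\in\mathbb{N}$, there is no division-by-zero issue and no case split on the parity of $k$ is needed.

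Next I would evaluate $\displaystyle\sum_{i=1}^{n}(2^k)^i=2^k\cdot\frac{(2^k)^n-1}{2^k-1}$, and the key observation is that the fraction appearing here is precisely $M_n^{(k)}$ again, by \eqref{binethomn}. Hence the geometric sum equals $2^kM_n^{(k)}$, and substituting back gives $\displaystyle\sum_{i=1}^{n}M_i^{(k)}=\frac{2^kM_n^{(k)}-n}{M_k}$, which is the asserted formula.

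As cross-checks, two alternative routes are available. An induction on $n$: the base case $n=1$ reduces to $M_1^{(k)}=1=\frac{2^k-1}{M_k}$, and the inductive step requires $\frac{2^kM_n^{(k)}-n}{M_k}+M_{n+1}^{(k)}=\frac{2^kM_{n+1}^{(k)}-(n+1)}{M_k}$, which after clearing $M_k$ and using $M_k=2^k-1$ is exactly the already-established relation $M_{n+1}^{(k)}=2^kM_n^{(k)}+1$ from \eqref{a}. A telescoping route also works: solving \eqref{a} for $M_i^{(k)}$ and summing produces a linear equation in $S_n:=\sum_{i=1}^{n}M_i^{(k)}$ whose solution is $S_n=\frac{M_{n+1}^{(k)}-(n+1)}{M_k}$, which is equivalent to the stated form after one more application of \eqref{a}.

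I do not expect a genuine obstacle here: the content is a geometric-series evaluation in disguise. The only points requiring care are recognizing the closed form $2^k\frac{(2^k)^n-1}{2^k-1}$ as $2^kM_n^{(k)}$ rather than leaving it raw, and keeping the indexing straight (the sum starts at $i=1$, which is what produces the leading factor $2^k$).
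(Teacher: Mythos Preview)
Your main argument is correct and essentially identical to the paper's proof: both substitute the Binet formula $M_i^{(k)}=\dfrac{(2^k)^i-1}{M_k}$, sum the resulting geometric progression $\sum_{i=1}^n (2^k)^i=2^k\cdot\dfrac{(2^k)^n-1}{2^k-1}$, and recognize this last fraction as $M_n^{(k)}$. The induction and telescoping cross-checks you add are sound but unnecessary for matching the paper.
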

\begin{theorem}[Partial sum with even indexes]
	For $k,n\in \mathbb{N}$, we have
	\begin{equation*}
		\sum_{i=1}^{n}M_{2i}^{(k)}=\frac{2^{2k}M_n^{(2k)}-n}{M_k}.
	\end{equation*}		
\end{theorem}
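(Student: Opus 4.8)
The plan is to mirror the proof of the preceding partial sum theorem, substituting Binet's formula \eqref{binethomn} for each term $M_{2i}^{(k)}$ and then evaluating a geometric series. First I would write
$$M_{2i}^{(k)} = \frac{(2^k)^{2i} - 1}{2^k - 1} = \frac{(2^{2k})^i - 1}{2^k - 1},$$
using the elementary identity $(2^k)^{2i} = 2^{2ki} = (2^{2k})^i$. This rewriting is the only real idea in the argument: it shows that summing over $i$ amounts to a geometric progression with common ratio $2^{2k}$ (rather than $2^k$), which is exactly what produces the order-$2k$ sequence on the right-hand side.

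Next I would sum from $i = 1$ to $n$ and pull out the constant:
$$\sum_{i=1}^{n} M_{2i}^{(k)} = \frac{1}{2^k - 1}\left( \sum_{i=1}^{n} (2^{2k})^i - n \right) = \frac{1}{2^k - 1}\left( 2^{2k}\,\frac{(2^{2k})^n - 1}{2^{2k} - 1} - n \right).$$
Then I would recognize the geometric-sum factor as $\dfrac{(2^{2k})^n - 1}{2^{2k} - 1} = M_n^{(2k)}$, again by Binet's formula \eqref{binethomn} with $k$ replaced by $2k$, and use $2^k - 1 = M_k$ from \eqref{binetMersenne}. Combining these yields $\sum_{i=1}^{n} M_{2i}^{(k)} = \dfrac{2^{2k} M_n^{(2k)} - n}{M_k}$, as claimed.

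There is no genuine obstacle here; the computation is routine once the substitution $(2^k)^{2i} = (2^{2k})^i$ is made. The only point needing a moment's care is the index bookkeeping in the geometric sum, namely $\sum_{i=1}^n r^i = r\,\frac{r^n - 1}{r - 1}$ rather than $\frac{r^n - 1}{r - 1}$, so that the leading factor $2^{2k}$ appears in the right place. Alternatively, the identity could be proved by induction on $n$ using the relation \eqref{a} applied to the order-$2k$ sequence, but the Binet approach is shorter and parallels the previous theorem directly.
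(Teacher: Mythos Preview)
Your proposal is correct and follows essentially the same route as the paper: substitute Binet's formula \eqref{binethomn}, rewrite $(2^k)^{2i}$ as $(2^{2k})^i$, sum the resulting geometric progression, and recognize the quotient $\dfrac{(2^{2k})^n-1}{2^{2k}-1}$ as $M_n^{(2k)}$. The paper's proof is line-by-line the same computation, only with the terms written out explicitly rather than in $\sum$-notation.
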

\begin{proof}
	Proceeding with the Binet's formula as above, we have
	\begin{align*}
		\sum_{i=1}^{n}M_{2i}^{(k)}&=M_{2}^{(k)}+M_{4}^{(k)}+...+M_{2n}^{(k)}\\
		&=\frac{(2^k)^2-1}{M_k}+\frac{{(2^k)}^4-1}{M_k}+...+\frac{{(2^k)}^{2n}-1}{M_k}\\
		&=\frac{1}{M_k}\Big((2^k)^2+{(2^k)}^4+...+{(2^k)}^{2n}-n\Big)\\
		&=\frac{1}{M_k}\Big(2^{2k}\frac{{(2^k)}^{2n}-1}{(2^k)^2-1}-n\Big)\\
		&=\frac{1}{M_k}\Big(2^{2k}M_n^{(2k)}-n\Big).
	\end{align*}
\end{proof}
\begin{theorem}[Partial sum with odd indexes]
	For $k,n\in \mathbb{N}$, we have
	\begin{equation*}
		\sum_{i=0}^{n}M_{2i+1}^{(k)}=\frac{2^kM_{n+1}^{(2k)}-n-1}{M_k}.
	\end{equation*}
\end{theorem}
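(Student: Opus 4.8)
The plan is to mimic the two preceding partial-sum proofs, relying entirely on the Binet formula \eqref{binethomn}. First I would write out the sum as
\[
\sum_{i=0}^{n}M_{2i+1}^{(k)}=\sum_{i=0}^{n}\frac{(2^k)^{2i+1}-1}{2^k-1}
=\frac{1}{M_k}\Big(2^k\sum_{i=0}^{n}(2^k)^{2i}-(n+1)\Big),
\]
using $2^k-1=M_k$ and pulling the constant term out to collect the $n+1$ summands of $-1$.

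Next I would evaluate the geometric sum $\sum_{i=0}^{n}(2^k)^{2i}=\sum_{i=0}^{n}(2^{2k})^{i}=\dfrac{(2^{2k})^{n+1}-1}{2^{2k}-1}$. The key observation is that this quotient is exactly $M_{n+1}^{(2k)}$ by the Binet formula \eqref{binethomn} applied with the parameter $2k$ in place of $k$, since $(2^{2k})^{n+1}-1$ over $2^{2k}-1$ matches that formula's shape at index $n+1$.

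Substituting back gives $\sum_{i=0}^{n}M_{2i+1}^{(k)}=\dfrac{2^k M_{n+1}^{(2k)}-(n+1)}{M_k}$, which is the claimed identity. There is no real obstacle here: the only thing to be careful about is the index bookkeeping — the sum runs from $i=0$ rather than $i=1$, so there are $n+1$ terms, producing the $-n-1$ in the numerator and the shift to $M_{n+1}^{(2k)}$ rather than $M_n^{(2k)}$; once that is tracked correctly the computation is routine. One could alternatively derive it from the two earlier theorems via $\sum_{i=0}^{n}M_{2i+1}^{(k)}=\sum_{i=1}^{2n+1}M_i^{(k)}-\sum_{i=1}^{n}M_{2i}^{(k)}$, but the direct Binet computation is cleaner and self-contained, so that is the route I would take.
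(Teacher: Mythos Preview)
Your proposal is correct and follows exactly the approach the paper intends: the paper's own proof merely says ``The proof is very similar to the above theorem,'' i.e.\ expand each term via the Binet formula \eqref{binethomn}, sum the resulting geometric series in $2^{2k}$, and recognise the quotient as $M_{n+1}^{(2k)}$. Your attention to the index range ($i=0,\dots,n$ giving $n+1$ terms) is precisely the only bookkeeping point that distinguishes this case from the even-index one.
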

\begin{proof}
	The proof is very similar to the above theorem.
\end{proof}

 \begin{theorem}[Binomial sum]
	For the higher order Mersenne numbers $\{M_{n}^{(k)}\}$, we have
\begin{align*}
	1.~ &\sum_{n=0}^{s-1}\binom{s-1}{n}M_{n}^{(k)}= \frac{(1+2^k)^{s-1}-2^{s-1}}{2^k-1}.\\
	2.~&\sum_{n=0}^{s-1}(-1)^n\binom{s-1}{n}M_{n}^{(k)}=(-1)^{s-1}M_k^{s-2}.
\end{align*}
\begin{proof}
	1. From Binet's formula \eqref{binethomn}, we write
	\begin{align*}
		\sum_{n=0}^{s-1}\binom{s-1}{n}M_{n}^{(k)}&=\sum_{n=0}^{s-1}\binom{s-1}{n}\frac{(2^k)^n-1}{2^k-1}\\
		&=\frac{1}{2^k-1}\Big[\sum_{n=0}^{s-1}\binom{s-1}{n}(2^k)^n-\sum_{n=0}^{s-1}\binom{s-1}{n}\Big]
	\end{align*}
		\begin{align*}
		&= \frac{1}{2^k-1}\Big((1+2^k)^{s-1}-2^{s-1}\Big) \quad 
		( \text{using~Binomial~theorem}).
	\end{align*}
	2. Similarly, we have 
	\begin{align*}
		\sum_{n=0}^{s-1}(-1)^n\binom{s-1}{n}M_{n}^{(k)}&=\sum_{n=0}^{s-1}(-1)^n\binom{s-1}{n}\frac{(2^k)^n-1}{2^k-1}\\
		&=\frac{1}{2^k-1}\Big[\sum_{n=0}^{s-1}\binom{s-1}{n}(-2^k)^n-\sum_{n=0}^{s-1}\binom{s-1}{n}(-1)^n\Big]\\
		&=\frac{1}{2^k-1}\Big((1-2^k)^{s-1}-0\Big) \quad 
		( \text{using~Binomial~theorem})\\
		&=\frac{(1-2^k)^{s-1}}{2^k-1}\\ 
		&=(-1)^{s-1}M_k^{s-2}.
	\end{align*}
\end{proof}
\end{theorem}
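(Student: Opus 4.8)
The plan is to derive both identities directly from the Binet formula \eqref{binethomn}, namely $M_n^{(k)}=\frac{(2^k)^{n}-1}{2^k-1}$, combined with a single application of the binomial theorem in each case.

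For part~1, I would substitute \eqref{binethomn} into the sum, factor out the constant $\frac{1}{2^k-1}$, and split the result into $\sum_{n=0}^{s-1}\binom{s-1}{n}(2^k)^n$ and $\sum_{n=0}^{s-1}\binom{s-1}{n}$. The binomial theorem evaluates these as $(1+2^k)^{s-1}$ and $2^{s-1}$ respectively, which immediately yields $\frac{(1+2^k)^{s-1}-2^{s-1}}{2^k-1}$. For part~2, the same substitution produces $\frac{1}{2^k-1}\bigl[\sum_{n=0}^{s-1}\binom{s-1}{n}(-2^k)^n-\sum_{n=0}^{s-1}(-1)^n\binom{s-1}{n}\bigr]$; the first sum is $(1-2^k)^{s-1}$ and the second is $(1-1)^{s-1}=0$, leaving $\frac{(1-2^k)^{s-1}}{2^k-1}$. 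Finally I would use the key observation that $2^k-1=M_k$, write $1-2^k=-M_k$, and cancel one factor of $M_k$ to arrive at $(-1)^{s-1}M_k^{\,s-2}$.

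There is no real obstacle beyond bookkeeping; the only subtlety worth flagging is the step $\sum_{n=0}^{s-1}(-1)^n\binom{s-1}{n}=0$, which is $0^{\,s-1}$ and hence holds for $s\geq 2$ (it equals $1$ when $s=1$), so part~2 is to be read with $s\geq 2$ — consistent with $M_k^{\,s-2}$ being an integer only then. An alternative would be induction on $s$ using the Pascal identity for $\binom{s-1}{n}$ together with the recurrence \eqref{recurrence_homn}, but the Binet-plus-binomial-theorem argument is shorter and is the one I would write out.
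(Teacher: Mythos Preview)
Your proposal is correct and follows exactly the same route as the paper: substitute the Binet formula \eqref{binethomn}, split the sum, and apply the binomial theorem to each piece, then for part~2 identify $2^k-1=M_k$ to reach $(-1)^{s-1}M_k^{\,s-2}$. Your additional remark that part~2 requires $s\geq 2$ (since $\sum_{n=0}^{s-1}(-1)^n\binom{s-1}{n}=0^{s-1}$) is a clarification the paper omits.
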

\subsection{Binomial transforms}
Now, we give the binomial transform $B_k=\{b_{k,n}\} $ for the higher order Mersenne numbers, where 
\begin{equation}\label{binomial}
	b_{k,n}=\sum_{i=0}^{n}\binom{n}{i}M_i^{(k)}.
\end{equation}
Note that the binomial transform of the classical Mersenne numbers is $B_1={0,1,5,19,65,211,...}$ which is indexed in OEIS as A001047. Thus on following \eqref{binomial}, the binomial transforms of the higher order Mersenne numbers for different values of $k$ are 
\begin{align*}
	B_1&=\{0,1,5,19,65,211,...\}:A001047\\
	B_2&=\{0,1,7,39,203,1031,...\}:A016127 \cup\{0\}\\
	B_3&=\{0,1,11,103,935,8431,..\}:A016133\cup\{0\}\\
	B_4&=\{0,1,19,327,5567,94655,...\}\\
	B_5&=\{0,1,35,1159,38255,1262431,...\}.
\end{align*}
\begin{lemma}\label{l1}
For the binomial transform, the following relation is provided 
\begin{equation*}
	b_{k,n+1}=\sum_{i=0}^{n}\binom{n}{i}\Big(M_{i+1}^{(k)}+M_i^{(k)}\Big).
\end{equation*}
\begin{proof}
	Note that, from binomial theorem we can write 
	\begin{equation*}
		\binom{n+1}{i}=\binom{n}{i}+\binom{n}{i-1}
	\end{equation*}
Hence from \eqref{binomial}
		\begin{align*}
			b_{k,n+1}&=\sum_{i=0}^{n+1}\binom{n+1}{i}M_i^{(k)}\\
			&=\sum_{i=0}^{n+1}\Big[\binom{n}{i}+\binom{n}{i-1}\Big]M_i^{(k)}\\
			&=\sum_{i=0}^{n}\binom{n}{i}M_i^{(k)}+\sum_{i=0}^{n}\binom{n}{i}M_{i+1}^{(k)}
		\end{align*}
	\end{proof}
\end{lemma}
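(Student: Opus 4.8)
The plan is to derive the identity purely by manipulating finite sums, using nothing about the sequence $M_n^{(k)}$ beyond the fact that it is defined for all nonnegative indices. Starting from the definition \eqref{binomial} evaluated at $n+1$, I would apply Pascal's rule $\binom{n+1}{i}=\binom{n}{i}+\binom{n}{i-1}$ to each binomial coefficient and split the sum into two pieces:
\[
b_{k,n+1}=\sum_{i=0}^{n+1}\binom{n+1}{i}M_i^{(k)}=\sum_{i=0}^{n+1}\binom{n}{i}M_i^{(k)}+\sum_{i=0}^{n+1}\binom{n}{i-1}M_i^{(k)}.
\]
The first sum loses its $i=n+1$ term because $\binom{n}{n+1}=0$, so it equals $\sum_{i=0}^{n}\binom{n}{i}M_i^{(k)}$, which is just $b_{k,n}$. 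For the second sum I would shift the index by $j=i-1$, obtaining $\sum_{j=-1}^{n}\binom{n}{j}M_{j+1}^{(k)}$; the $j=-1$ term drops since $\binom{n}{-1}=0$, leaving $\sum_{j=0}^{n}\binom{n}{j}M_{j+1}^{(k)}$. Adding the two pieces and collecting the common factor $\binom{n}{i}$ gives
\[
b_{k,n+1}=\sum_{i=0}^{n}\binom{n}{i}\Big(M_{i}^{(k)}+M_{i+1}^{(k)}\Big),
\]
which is the assertion.

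The only delicate point is the bookkeeping at the endpoints: one must be sure that the out-of-range binomial coefficients $\binom{n}{n+1}$ and $\binom{n}{-1}$ are legitimately zero so that both sums can be truncated to the range $0\le i\le n$ before recombining. Everything else is a routine reindexing, so I would expect the write-up to be short; as a sanity check one can verify the relation against the listed values, e.g. with $k=1$ one has $b_{1,2}=5$ and $\sum_{i=0}^{1}\binom{1}{i}(M_i^{(1)}+M_{i+1}^{(1)})=(0+1)+(1+3)=5$.
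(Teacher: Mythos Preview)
Your proof is correct and follows essentially the same route as the paper: apply Pascal's rule to $\binom{n+1}{i}$, split the sum, drop the vanishing boundary terms $\binom{n}{n+1}$ and $\binom{n}{-1}$, and reindex the second piece. If anything, your write-up is slightly more careful about the endpoint bookkeeping than the paper's version.
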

Thus, from Lemma \ref{l1} we deduce that
\begin{equation*}
	b_{k,n+1}-b_{k,n}=\sum_{i=0}^{n}\binom{n}{i}M_{i+1}^{(k)}.
\end{equation*}
\begin{theorem}[Recurrence relation]
	The binomial transform $\{b_{k,n}\}$ holds the following relation
	\begin{equation}\label{binomial1}
		b_{k,n+1}=(2+m_k)b_{k,n}-2m_kb_{k,n-1}, \quad n\geq1 
	\end{equation} with initial conditions $b_{k,0}=0$ and $b_{k,1}=1$ where $m_k$ is the kth Mersenne-Lucas number.
\end{theorem}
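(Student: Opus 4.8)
The plan is to first derive a closed (Binet-type) formula for $b_{k,n}$ directly from the definition \eqref{binomial} together with the Binet formula \eqref{binethomn}, and then read the recurrence off from that formula. Substituting \eqref{binethomn} into \eqref{binomial} and splitting the sum gives
\[
 b_{k,n}=\frac{1}{2^k-1}\Big[\sum_{i=0}^{n}\binom{n}{i}(2^k)^i-\sum_{i=0}^{n}\binom{n}{i}\Big].
\]
Both sums run over the full range $0$ to $n$, so the binomial theorem applies cleanly: $\sum_{i}\binom{n}{i}(2^k)^i=(1+2^k)^n=m_k^{\,n}$ and $\sum_{i}\binom{n}{i}=2^n$. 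Hence $b_{k,n}=\dfrac{m_k^{\,n}-2^{\,n}}{2^k-1}$.

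Next I would observe that $m_k=2^k+1$ and $2$ are precisely the two roots of the quadratic $t^2-(2+m_k)t+2m_k=0$, since $(t-m_k)(t-2)=t^2-(m_k+2)t+2m_k$, and note that this is the characteristic equation attached to the recurrence \eqref{binomial1} (indeed $2+m_k=2^k+3$ and $2m_k=2^{k+1}+2$). Consequently both geometric sequences $\{m_k^{\,n}\}$ and $\{2^{\,n}\}$ satisfy $x_{n+1}=(2+m_k)x_n-2m_k x_{n-1}$, and therefore so does any linear combination of them — in particular $b_{k,n}=(m_k^{\,n}-2^{\,n})/(2^k-1)$. This establishes \eqref{binomial1} for all $n\geq1$.

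Finally I would verify the initial conditions from the closed form: $b_{k,0}=\frac{1-1}{2^k-1}=0$ and $b_{k,1}=\frac{m_k-2}{2^k-1}=\frac{(2^k+1)-2}{2^k-1}=1$, matching the stated values (and consistent with evaluating \eqref{binomial} at $n=0,1$ directly).

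There is no genuinely hard step here; the only points requiring care are the bookkeeping when splitting the binomial sum and the explicit identification of $2+m_k$ and $2m_k$ so that \eqref{binomial1} is recognized as the characteristic recurrence of the root pair $\{m_k,2\}$. As an alternative one could avoid the closed form and argue purely combinatorially, starting from Lemma \ref{l1} and the remark $b_{k,n+1}-b_{k,n}=\sum_{i}\binom{n}{i}M_{i+1}^{(k)}$ and then invoking the recurrence \eqref{recurrence_homn} for $M_n^{(k)}$; however, this forces one to juggle three binomial sums at once and is appreciably messier than simply exhibiting the Binet-type formula, so I would present the closed-form proof.
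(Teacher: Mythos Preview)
Your proof is correct, but it inverts the paper's logic. The paper argues combinatorially: starting from Lemma~\ref{l1}, it rewrites $b_{k,n+1}$ using the recurrence \eqref{recurrence_homn} for $M_i^{(k)}$, obtains the auxiliary relation $b_{k,n+1}=(2^k+2)b_{k,n}-2^k\sum_{i=1}^{n}\binom{n}{i}M_{i-1}^{(k)}+1$, shifts $n$ and manipulates the remaining sum via Pascal's rule, and finally eliminates that sum to arrive at \eqref{binomial1}; the Binet-type formula is then deduced \emph{afterwards}, in Theorem~\ref{thmBinetBinom}, from the recurrence. You instead jump straight to the closed form $b_{k,n}=(m_k^{\,n}-2^n)/(2^k-1)$ via the binomial theorem and read the recurrence off from its characteristic polynomial. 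Your route is shorter and more transparent in this particular case because $M_i^{(k)}$ has such a simple Binet formula; the paper's approach, while messier, has the virtue of deriving the transformed recurrence purely from the original recurrence and would still work for sequences without a convenient closed form.
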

\begin{proof}
	From Lemma \ref{l1}, we have
	\begin{align}\label{key1}
		b_{k,n+1}&=\sum_{i=0}^{n}\binom{n}{i}\Big(M_{i+1}^{(k)}+M_i^{(k)}\Big) \nonumber\\
		&=\sum_{i=1}^{n}\binom{n}{i}\Big(M_{i+1}^{(k)}+M_i^{(k)}\Big)+(M_{1}^{(k)}+M_0^{(k)}) \nonumber\\
		&=\sum_{i=1}^{n}\binom{n}{i}\Big((2^k+1)M_i^{(k)}-2^kM_{i-1}^{(k)}+M_i^{(k)}\Big)+(M_{1}^{(k)}+M_0^{(k)})\nonumber\\
		&=\sum_{i=1}^{n}\binom{n}{i}\Big((2^k+2)M_i^{(k)}-2^kM_{i-1}^{(k)}\Big)+1 \nonumber\\
		&=(2^k+2)\sum_{i=1}^{n}\binom{n}{i}M_i^{(k)}-2^k\sum_{i=1}^{n}\binom{n}{i}M_{i-1}^{(k)}+1 \nonumber\\
		&=(2^k+2)b_{k,n}-2^k\sum_{i=1}^{n}\binom{n}{i}M_{i-1}^{(k)}+1.
	\end{align}
	Now, replacing $ n+1 $ by $ n $, we get
	\begin{align*}
		b_{k,n}&=(2^k+2)b_{k,n-1}-2^k\sum_{i=1}^{n-1}\binom{n-1}{i}M_{i-1}^{(k)}+1\\
		&=(2^k+2)b_{k,n-1}+2^kb_{k,n-1}-2^kb_{k,n-1}-2^k\sum_{i=1}^{n-1}\binom{n-1}{i}M_{i-1}^{(k)}+1\\
		&=2(2^k+1)b_{k,n-1}-2^k\sum_{i=1}^{n-1}\binom{n-1}{i}M_{i}^{(k)}-2^k\sum_{i=1}^{n-1}\binom{n-1}{i}M_{i-1}^{(k)}+1\\
		&=2m_kb_{k,n-1}-2^k\sum_{i=1}^{n}\Big[\binom{n-1}{i-1}+\binom{n-1}{i}\Big]M_{i-1}^{(k)}+1\\
	&=2m_kb_{k,n-1}-2^k\sum_{i=1}^{n}\binom{n}{i}M_{i-1}^{(k)}+1,\quad \text{( Since,  $\binom{n-1}{n}=0$)}.
	\end{align*}
	Now substituting expression $1-2^k\sum_{i=1}^{n}\binom{n}{i}M_{i-1}^{(k)}$ in \eqref{key1}, we get
	\begin{align*}
		b_{k,n+1}&=(2^k+2)b_{k,n}+b_{k,n}-2m_kb_{k,n-1}\\
		&=(2+m_k)b_{k,n}-2m_kb_{k,n-1}
	\end{align*} where $b_{k,0}=0$ and $b_{k,1}=1$.
\end{proof}  
\begin{theorem}[Binet-type formula for $b_{k,n}$]\label{thmBinetBinom}
	An explicit formula for binomial transform of the higher order Mersenne sequence is given by
	\begin{equation*}
		b_{k,n}=\frac{{m_k}^n-2^n}{2^k-1}.
	\end{equation*}
	\begin{proof}
		On substituting $b_{k,n}=r^n$ in \eqref{binomial1} and simplifying it, we get the characteristic equation as 
		\begin{equation*}
			r^2-(2+m_k)r-2m_k=0
		\end{equation*}
	whose characteristic roots are obtained as $r_1=m_k$ and $r_2=2$.
	Hence, solution of \eqref{binomial1} is given as 
	\begin{align}\label{key2}
		b_{k,n}=C_1{m_k}^n+C_22^n
	\end{align}
	and by using the initial values of recurrence relation \eqref{binomial1}, we get the constants $C_1$ and $C_2$ as $1/M_k$ and $-1/{M_k}$, respectively. After, replacing the values of $C_1$ and $C_2$ in \eqref{key2}, we get the required result.
	\end{proof}
\end{theorem}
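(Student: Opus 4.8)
The plan is to solve the linear recurrence \eqref{binomial1} by the characteristic-root method, exactly as was done for the Mersenne numbers themselves. First I would substitute the ansatz $b_{k,n}=r^n$ into $b_{k,n+1}=(2+m_k)b_{k,n}-2m_kb_{k,n-1}$ and divide by $r^{n-1}$, obtaining the characteristic equation $r^2-(2+m_k)r+2m_k=0$. The key point is that this factors as $(r-2)(r-m_k)=0$, so the roots are simply $r_1=m_k$ and $r_2=2$; since $m_k=2^k+1>2$, they are distinct for every $k\in\N$, and hence the general solution is $b_{k,n}=C_1m_k^{n}+C_2 2^{n}$.

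Next I would determine $C_1$ and $C_2$ from the initial data. Directly from \eqref{binomial} one has $b_{k,0}=\binom{0}{0}M_0^{(k)}=0$ and $b_{k,1}=\binom{1}{0}M_0^{(k)}+\binom{1}{1}M_1^{(k)}=1$, matching the stated initial conditions. Setting $n=0$ gives $C_1+C_2=0$, and $n=1$ gives $C_1 m_k+2C_2=1$; eliminating $C_2$ yields $C_1(m_k-2)=1$. The one mildly delicate step is to observe that $m_k-2=(2^k+1)-2=2^k-1=M_k$, so that $C_1=1/M_k$ and $C_2=-1/M_k$. Substituting back gives $b_{k,n}=\dfrac{m_k^{n}-2^{n}}{M_k}=\dfrac{m_k^{n}-2^{n}}{2^k-1}$, as claimed.

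As a cross-check — and in fact the quickest route — one can bypass the recurrence altogether: the first Binomial sum identity established above, read with $s=n+1$, states precisely that
\begin{align*}
	b_{k,n}=\sum_{i=0}^{n}\binom{n}{i}M_i^{(k)}=\frac{(1+2^k)^{n}-2^{n}}{2^k-1},
\end{align*}
and since $1+2^k=m_k$ this is already the assertion. I do not anticipate a genuine obstacle here: once the characteristic polynomial is factored the argument is routine, and the only thing requiring care is the bookkeeping between $m_k$ and $2^k$ — namely the identity $m_k-2=M_k=2^k-1$ — which is precisely what turns the constants into $\pm 1/M_k$ and produces the denominator $2^k-1$ in the final formula.
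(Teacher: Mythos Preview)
Your proof is correct and follows essentially the same characteristic-root argument as the paper, though you write the characteristic equation with the correct sign $+2m_k$ (the paper has a typo with $-2m_k$) and you make the factorisation and the distinctness of the roots explicit. Your additional observation that the formula is already the first Binomial sum identity with $s=n+1$ is a valid shortcut that the paper does not mention.
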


\begin{theorem}[Generating function ]
	For binomial transform of the higher order Mersenne sequence, we have
	\begin{equation}\label{bthomngf}
		b_k(x)=\frac{x}{1-(2+m_k)x+2m_kx^2}
	\end{equation}
\end{theorem}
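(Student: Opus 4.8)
The plan is to mimic the derivation of Theorem~\ref{thmGenfun}: set $b_k(x)=\sum_{n=0}^{\infty}b_{k,n}x^n$ and exploit the recurrence \eqref{binomial1}. First I would multiply both sides of $b_{k,n+1}=(2+m_k)b_{k,n}-2m_kb_{k,n-1}$ by $x^{n+1}$ and sum over $n\geq1$ (the recurrence is only asserted for $n\geq1$, so the low-order terms must come from the initial data). The left-hand side becomes $b_k(x)-b_{k,0}-b_{k,1}x=b_k(x)-x$ using $b_{k,0}=0,\ b_{k,1}=1$; the first term on the right becomes $(2+m_k)x\bigl(b_k(x)-b_{k,0}\bigr)=(2+m_k)x\,b_k(x)$; and the last becomes $-2m_kx^2\,b_k(x)$. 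Collecting terms yields $\bigl(1-(2+m_k)x+2m_kx^2\bigr)b_k(x)=x$, which is precisely \eqref{bthomngf} after dividing through.

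Alternatively, and perhaps more transparently, one can invoke the Binet-type formula of Theorem~\ref{thmBinetBinom}, $b_{k,n}=\dfrac{m_k^{\,n}-2^n}{2^k-1}$, and sum the two resulting geometric series to get $b_k(x)=\dfrac{1}{2^k-1}\Bigl(\dfrac{1}{1-m_kx}-\dfrac{1}{1-2x}\Bigr)=\dfrac{1}{2^k-1}\cdot\dfrac{(m_k-2)x}{(1-m_kx)(1-2x)}$. The point worth spelling out is the simplification $m_k-2=(2^k+1)-2=2^k-1=M_k$, so that the prefactor $1/(2^k-1)$ cancels and we are left with $b_k(x)=\dfrac{x}{(1-m_kx)(1-2x)}=\dfrac{x}{1-(2+m_k)x+2m_kx^2}$. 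Convergence is irrelevant at the level of formal power series, but if an analytic statement is wanted it holds for $|x|<\min\{1/2,1/m_k\}=1/m_k$.

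There is no genuine obstacle on either route; the only things needing a little care are the index shifts and the treatment of the boundary terms $b_{k,0},b_{k,1}$ in the first approach, and recognizing the cancellation $m_k-2=2^k-1$ in the second. I would present the recurrence-based derivation as the main proof, paralleling Theorem~\ref{thmGenfun}, and then remark that the denominator factors as $(1-2x)(1-m_kx)$, consistent with the characteristic roots $2$ and $m_k$ found in Theorem~\ref{thmBinetBinom}.
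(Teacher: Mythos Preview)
Your two derivations are both correct. The paper, however, takes a third route: it cites the general fact (from Barry \cite{barry2005catalan} and Prodinger \cite{proctinger1993some}) that if $f(x)$ is the ordinary generating function of $\{a_n\}$ then the generating function of its binomial transform is $\tfrac{1}{1-x}f\!\left(\tfrac{x}{1-x}\right)$, and simply substitutes $f(x)=G(x,k)$ from Theorem~\ref{thmGenfun}. Your recurrence-based argument is fully self-contained and mirrors the style of Theorem~\ref{thmGenfun}, while your Binet-formula route produces the factored denominator $(1-2x)(1-m_kx)$ directly and makes the cancellation $m_k-2=2^k-1$ explicit; the paper's argument is shorter but relies on an external identity and leaves the actual substitution to the reader. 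Either of your proofs could replace the paper's without loss, and the Binet version arguably reveals more structure.
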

\begin{proof}
	By virtue of Barry \cite{barry2005catalan} and Prodinger \cite{proctinger1993some}, if $ f(x)$ is the ordinary generating function of sequence $\{a_n\}$ then the generating function of the binomial transform of $\{a_n\}$ is given by  $\frac{1}{1-x}f\Big(\frac{x}{1-x}\Big)$.
	\\Thus, considering $f(x)=G(x,k)$ from Theorem \ref{thmGenfun}, we get the required generating function.
\end{proof}
\begin{theorem}[Exponential generating function] 
	For binomial transform of higher order Mersenne numbers, we have
	\begin{equation*}\label{bthomnegf}
		E_k(x)=\frac{e^{m_kx}-e^{2x}}{2^k-1}.
	\end{equation*}
\end{theorem}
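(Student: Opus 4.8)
The plan is to imitate the proof of the exponential generating function for the higher order Mersenne numbers $M_n^{(k)}$ themselves, this time feeding in the Binet-type formula for the binomial transform established in Theorem \ref{thmBinetBinom}. First I would write the exponential generating function as the power series $E_k(x)=\sum_{n=0}^{\infty} b_{k,n}\,\frac{x^n}{n!}$ and substitute $b_{k,n}=\frac{m_k^{\,n}-2^n}{2^k-1}$, where $m_k$ is the $k$th Mersenne--Lucas number.

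Since $2^k-1$ is a constant for fixed $k$, I would next pull it out of the summation and split the resulting series into the two pieces $\sum_{n\geq 0}\frac{(m_k x)^n}{n!}$ and $\sum_{n\geq 0}\frac{(2x)^n}{n!}$. Each of these is the Maclaurin expansion of an exponential, namely $e^{m_k x}$ and $e^{2x}$, so after recombining one obtains $E_k(x)=\frac{e^{m_k x}-e^{2x}}{2^k-1}$, which is exactly the claimed identity.

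There is essentially no obstacle here: the only point worth remarking is that the termwise splitting and rearrangement are legitimate because both exponential series converge absolutely for every $x$ (equivalently, the whole computation may be read at the level of formal power series). As a sanity check, putting $k=1$ gives $m_1=3$ and $2^k-1=1$, so the formula specializes to $e^{3x}-e^{2x}$, the exponential generating function of the binomial transform $\{0,1,5,19,65,211,\dots\}$ of the classical Mersenne sequence.
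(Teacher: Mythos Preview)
Your proposal is correct and follows essentially the same approach as the paper: substitute the Binet-type formula from Theorem~\ref{thmBinetBinom} into the defining series $E_k(x)=\sum_{n\ge 0} b_{k,n}\,x^n/n!$, factor out $1/(2^k-1)$, split into the two exponential series, and identify them with $e^{m_kx}$ and $e^{2x}$. The paper does not include your remarks on absolute convergence or the $k=1$ sanity check, but the argument is otherwise line-for-line the same.
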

\begin{proof}
	Let $E_k(x)=\sum_{n=0}^{\infty}b_{k,n}\frac{x^n}{n!}$ be the exponential generating function for $\{b_{k,n}\}$. Then from Theorem \ref{thmBinetBinom}, we write 
	\begin{align*}
		E_k(x) &=\sum_{n=0}^{\infty}\Big(\frac{{m_k}^n-2^n}{2^k-1}\Big) \frac{x^n}{n!} \\
		&=\frac{1}{2^k-1} \sum_{n=0}^{\infty} \frac{({m_k}^n-2^n)x^n}{n!} \\
		&=\frac{1}{2^k-1} \Big(\sum_{n=0}^{\infty} \frac{({m_k}x)^n}{n!} -\sum_{n=0}^{\infty} \frac{(2x)^n}{n!} \Big).	
	\end{align*}
	Thus, on simplification we get the required result.
\end{proof}
\section{Matrix representation}
Here, we give matrix representation and tridiagonal matrix structure for higher order Mersenne numbers and study closed form formula for higher order Mersenne numbers via determinant of tridiagonal matrices.
\\
Consider the matrix $U= 
		\begin{bmatrix} 
			0 & 1 \\
			-2^k & 2^k+1 \\ 
		\end{bmatrix}$,   
then clearly $\det U=2^k$. \\Now, in next theorem we give a generating matrix which consists higher order Mersenne numbers as their entries and having $U$ as base matrix.
\begin{theorem}
	For $n \in \N$, we have
	\begin{align}\label{nthmatrix}
		U^n=\begin{bmatrix} 
			  -2^kM_{n-1}^{(k)} & M_{n}^{(k)} \\
			 -2^k M_{n}^{(k)}& M_{n+1}^{(k)} \\ 
	  	  \end{bmatrix}.
	\end{align}
\end{theorem}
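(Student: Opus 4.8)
The plan is to argue by induction on $n$, using the defining recurrence \eqref{recurrence_homn} for the higher order Mersenne numbers to handle the inductive step. Since the right-hand side of \eqref{nthmatrix} is a matrix whose entries are all of the form (a power of $2^k$ times) a higher order Mersenne number, and $U$ itself has small constant entries, the multiplication step will simply reproduce the recurrence $M_{m+2}^{(k)}=(2^k+1)M_{m+1}^{(k)}-2^kM_m^{(k)}$ in two of the four entries.

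First I would check the base case $n=1$. Using $M_0^{(k)}=0$, $M_1^{(k)}=1$, and $M_2^{(k)}=2^k+1$ (the identities recorded just after the table), the claimed matrix becomes $\begin{bmatrix} 0 & 1 \\ -2^k & 2^k+1 \end{bmatrix}=U$, so the formula holds for $n=1$.

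Next, assuming \eqref{nthmatrix} holds for some $n\geq 1$, I would compute $U^{n+1}=U^nU$ by direct block multiplication:
\begin{equation*}
	U^{n+1}=\begin{bmatrix} -2^kM_{n-1}^{(k)} & M_{n}^{(k)} \\ -2^kM_{n}^{(k)} & M_{n+1}^{(k)} \end{bmatrix}\begin{bmatrix} 0 & 1 \\ -2^k & 2^k+1 \end{bmatrix}=\begin{bmatrix} -2^kM_{n}^{(k)} & -2^kM_{n-1}^{(k)}+(2^k+1)M_{n}^{(k)} \\ -2^kM_{n+1}^{(k)} & -2^kM_{n}^{(k)}+(2^k+1)M_{n+1}^{(k)} \end{bmatrix}.
\end{equation*}
The $(1,1)$ and $(2,1)$ entries are already in the desired form with $n$ replaced by $n+1$. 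For the $(1,2)$ entry, the recurrence \eqref{recurrence_homn} with index shifted gives $-2^kM_{n-1}^{(k)}+(2^k+1)M_{n}^{(k)}=M_{n+1}^{(k)}$, and for the $(2,2)$ entry it gives $-2^kM_{n}^{(k)}+(2^k+1)M_{n+1}^{(k)}=M_{n+2}^{(k)}$. Hence $U^{n+1}$ has exactly the form asserted in \eqref{nthmatrix} with $n+1$ in place of $n$, closing the induction.

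There is no real obstacle here: the only subtlety is bookkeeping the index shifts so that the recurrence is applied at the correct place, and making sure the base case uses the correct small values $M_0^{(k)}, M_1^{(k)}, M_2^{(k)}$. One could alternatively note $\det U = 2^k$ and $\operatorname{tr} U = 2^k+1$, so $U$ satisfies $U^2=(2^k+1)U-2^kI$ by Cayley–Hamilton, which is the same recurrence; this gives a one-line conceptual explanation of why the Mersenne recurrence appears, but the inductive verification above is the cleanest way to pin down the exact entries.

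\medskip
\noindent\textbf{Remark for the reader.} Taking determinants of both sides of \eqref{nthmatrix} recovers Cassini's identity (Corollary \ref{cassini}), since $\det(U^n)=(2^k)^n$ and the determinant of the right-hand side is $-2^kM_{n-1}^{(k)}M_{n+1}^{(k)}+2^k\big(M_n^{(k)}\big)^2$.
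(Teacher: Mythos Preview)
Your proof is correct and follows exactly the same approach as the paper: induction on $n$, verifying the base case $n=1$ from the initial values, and then computing $U^{n+1}=U^nU$ and applying the recurrence \eqref{recurrence_homn} to simplify the right-column entries. The additional Cayley--Hamilton remark and the Cassini observation are nice extras but not part of the paper's argument.
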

\begin{proof}
	We prove it by inductive hypothesis on $n$. For $n=1$,
	\begin{align*}
		U=\begin{bmatrix} 
				-2^kM_{0}^{(k)} & M_{1}^{(k)} \\
				-2^k M_{1}^{(k)}& M_{2}^{(k)} \\ 
			\end{bmatrix}
		=\begin{bmatrix} 
			0 & 1 \\
			-2^k & 2^{k}+1 \\ 
		\end{bmatrix}
	\end{align*}
which is verified using \eqref{recurrence_homn} by considering the initial values.
	Suppose the result i.e. \eqref{nthmatrix} is true for $n$. Then
	\begin{align*}
		U^{n+1}=U^{n}U&=
		\begin{bmatrix} 
			-2^kM_{n-1}^{(k)} & M_{n}^{(k)} \\
			-2^k M_{n}^{(k)}& M_{n+1}^{(k)} \\ 
		\end{bmatrix}
		\begin{bmatrix} 
			0 & 1 \\
			-2^k & 2^{k}+1 \\ 
		\end{bmatrix} \\
	 &= \begin{bmatrix} 
	 		-2^kM_{n}^{(k)} & -2^kM_{n-1}^{(k)}+(2^{k}+1)M_{n}^{(k)} \\
	 		-2^k M_{n+1}^{(k)}& -2^k M_{n}^{(k)}+(2^{k}+1)M_{n+1}^{(k)} \\ 
	 	\end{bmatrix}\\
 	&= \begin{bmatrix} 
 		 -2^kM_{n}^{(k)} & M_{n+1}^{(k)} \\
 		 -2^k M_{n+1}^{(k)}& M_{n+2}^{(k)} \\ 
       \end{bmatrix} 
   \qquad \text{(using relation \eqref{recurrence_homn})}.
	\end{align*}
This completes the proof.
\end{proof}
\begin{theorem}
	For $n \geq 1$, the determinant of matrix $U^{n}$ is $2^{kn}$ and the trace of matrix $U^{n}$ is $1+2^{kn}$.
\end{theorem}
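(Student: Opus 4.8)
The plan is to read off both quantities from the explicit form of $U^n$ in \eqref{nthmatrix} and to exploit the fact that determinant and trace of a power are completely controlled by the eigenvalues of the base matrix. First I would recall that the characteristic polynomial of $U$ is exactly $x^2-(2^k+1)x+2^k$, i.e.\ the characteristic polynomial of the recurrence \eqref{recurrence_homn}, whose roots are $2^k$ and $1$. Since $k\geq 1$ these two eigenvalues are distinct, so $U$ is diagonalizable and $U^n$ has eigenvalues $(2^k)^n$ and $1^n=1$; hence $\det U^n=(2^k)^n\cdot 1=2^{kn}$ and $\operatorname{tr}U^n=(2^k)^n+1=1+2^{kn}$.

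Alternatively — and this is the route I would actually put in the paper, since it stays entirely within the tools already developed — I would compute directly from \eqref{nthmatrix}. For the determinant,
\begin{align*}
\det U^n &= \big(-2^kM_{n-1}^{(k)}\big)M_{n+1}^{(k)} - M_n^{(k)}\big(-2^kM_n^{(k)}\big)\\
&= -2^k\Big(M_{n-1}^{(k)}M_{n+1}^{(k)}-\big(M_n^{(k)}\big)^2\Big)\\
&= -2^k\cdot\big(-(2^k)^{n-1}\big) = 2^{kn},
\end{align*}
where the penultimate step is Cassini's identity (Corollary \ref{cassini}); equivalently one may simply invoke $\det U^n=(\det U)^n=(2^k)^n$. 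For the trace, using the Binet formula \eqref{binethomn},
\begin{align*}
\operatorname{tr}U^n &= -2^kM_{n-1}^{(k)} + M_{n+1}^{(k)} = \frac{-2^k\big((2^k)^{n-1}-1\big) + \big((2^k)^{n+1}-1\big)}{2^k-1}\\
&= \frac{(2^k)^{n+1}-(2^k)^{n} + 2^k - 1}{2^k-1} = \frac{(2^k)^{n}(2^k-1)+(2^k-1)}{2^k-1} = (2^k)^{n}+1 = 1+2^{kn}.
\end{align*}

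The argument is routine and there is no real obstacle. The only point deserving a word of care is, in the eigenvalue approach, observing that $2^k\neq 1$ for $k\geq 1$, so that $U$ is genuinely diagonalizable and the formula $\operatorname{tr}U^n=r_1^n+r_2^n$ applies without a Jordan-block correction. If one prefers not to rely on that observation at all, the direct computation above via Cassini's identity and \eqref{binethomn} requires no hypothesis beyond $k\in\N$, which is already in force throughout the section.
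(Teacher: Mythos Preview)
Your proposal is correct, and the route you say you would actually put in the paper---computing $\det U^n$ from \eqref{nthmatrix} via Cassini's identity and $\operatorname{tr}U^n$ via the Binet formula \eqref{binethomn}---is exactly the argument the paper gives. The eigenvalue alternative you sketch is a nice aside (and in fact $\operatorname{tr}U^n=r_1^n+r_2^n$ holds even without distinctness, via the Jordan form), but the paper does not invoke it.
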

\begin{proof}
	Since, $\det U^{n} =  -2^k[M_{n-1}^{(k)}M_{n+1}^{(k)}-(M_{n}^{(k)})^2]$.
	Hence, the determinant follows from Corollary \ref{cassini}.
	Similarly, for the \textit{trace} of $U^{n}$, we have
	\begin{align*}
		trace (U^{n}) & =  -2^kM_{n-1}^{(k)} + M_{n+1}^{(k)} \\
		& =  -2^k\Big(\frac{(2^k)^{n-1}-1}{2^k-1}\Big) +\Big(\frac{(2^k)^{n+1} -1}{2^k-1}\Big) \quad \text{(from \eqref{binethomn})}\\
		& = \Big(\frac{2^k-2^{kn}+2^{kn+k}-1}{2^k-1}\Big) \\
		& = \frac{(2^k-1)(1+2^{kn})}{2^k-1} = 1+2^{kn}
	\end{align*}
as required.
\end{proof}
\noindent 
Thus, we can say that the matrix $U^{n}$ is a non-singular matrix independent of $k$ and $n$ and its eigenvalues are $2^{kn}$ and $1$.

Now, we use tridiagonal matrices and give that the $n$th term of the higher order Mersenne numbers can be obtained as the determinant of tridiagonal matrix $V_{n-1}$, where matrix $V_n$ is defined as
\begin{align*}
	V_n=\begin{bmatrix} 
		M_{2}^{(k)} & M_{1}^{(k)} &  &  &  &  \\ 
		2^k & (2^{k}+1) & 1 &  &  &  \\ 
		& 2^k & (2^{k}+1) & 1 &  &  \\ 
		&  & \ddots & \ddots & \ddots &  \\ 
		&  &  & 2^k & (2^{k}+1) & 1 \\ 
		&  &  &  & 2^k & (2^{k}+1)%
	\end{bmatrix}_{n\times n}.
\end{align*}
Note that, a tridiagonal matrix is a square matrix whose all the elements except on the main diagonal, first sub-diagonal and first super-diagonal, are zero.

Taking into account the properties of determinant and continuant of a tridiagonal matrix, we have
\begin{align*}
	\det(V_{1}) & =M_{2}^{(k)} \\
	\det(V_{2}) & =(2^{k}+1)\det(V_{1})- 2^kM_{1}^{(k)} = M_{3}^{(k)}\quad \text{(from \eqref{recurrence_homn})}\\
	\det(V_{3}) & =(2^{k}+1)\det(V_{2})- 2^kM_{2}^{(k)}\\
	& =(2^{k}+1)\det(V_{2})- 2^k\det(V_{1})= M_{4}^{(k)}\quad \text{(from \eqref{recurrence_homn})} \\
	&~~\vdots \\
	\det(V_{n}) & =(2^{k}+1)\det(V_{n-1})- 2^k\det(V_{n-2})= M_{n+1}^{(k)}.
\end{align*}
Thus, we have the following result involving higher order Mersenne numbers and the determinant of a tridiagonal matrix.
\begin{theorem}
	Let $n \geq 1$ and $V_n$ be a tridiagonal matrix as defined above, then 
	$$M_{n+1}^{(k)}=\det(V_{n}).$$
\end{theorem}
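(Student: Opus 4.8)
The plan is to prove this by induction on $n$, exploiting the fact that the determinant of a tridiagonal matrix satisfies a three–term (continuant) recurrence which here coincides exactly with the defining recurrence \eqref{recurrence_homn} of the higher order Mersenne numbers. Observe first that, since $M_1^{(k)}=1$ and $M_2^{(k)}=2^k+1$, the matrix $V_n$ is in fact the constant tridiagonal matrix with $2^k+1$ on the main diagonal, $1$ on the super-diagonal and $2^k$ on the sub-diagonal; in particular every leading principal submatrix of $V_n$ is again of this same shape, namely $V_m$ for the appropriate $m$.

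First I would record the continuant recurrence. Expanding $\det(V_n)$ along its last row $(0,\dots,0,2^k,\,2^k+1)$, only the entries in columns $n-1$ and $n$ contribute. The column-$n$ term has sign $(-1)^{2n}=+1$ and gives $(2^k+1)\det(V_{n-1})$. For the column-$(n-1)$ term one deletes row $n$ and column $n-1$; the resulting $(n-1)\times(n-1)$ matrix has last column equal to $(0,\dots,0,1)^{\top}$, and one further Laplace expansion along that column yields the minor $\det(V_{n-2})$, while the sign of this term is $(-1)^{2n-1}=-1$. Collecting these, one obtains
\[
	\det(V_n)=(2^k+1)\det(V_{n-1})-2^k\det(V_{n-2}),
\]
which is precisely \eqref{recurrence_homn} applied to the sequence $\{\det(V_n)\}$.

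Next I would pin down the base cases, exactly as in the two displayed computations preceding the statement: $\det(V_1)=M_2^{(k)}$ and $\det(V_2)=(2^k+1)M_2^{(k)}-2^kM_1^{(k)}=M_3^{(k)}$ by \eqref{recurrence_homn}. Thus the sequences $\{\det(V_n)\}_{n\ge1}$ and $\{M_{n+1}^{(k)}\}_{n\ge1}$ satisfy the same second-order linear recurrence with the same two initial values, and a straightforward induction gives $\det(V_n)=M_{n+1}^{(k)}$ for all $n\ge1$.

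The one genuinely delicate point is the justification of the $\det(V_{n-2})$ minor in the cofactor expansion: one must verify that deleting the correct row and column from the constant tridiagonal matrix, and then performing the secondary expansion, really returns $V_{n-2}$ and not some perturbed block. This is where it matters that the first row and column of $V_n$ carry no special entries (because $M_1^{(k)}=1$ and $M_2^{(k)}=2^k+1$ already fit the constant pattern), which keeps the argument clean; everything else is routine. An essentially equivalent shortcut is to quote the standard continuant formula for tridiagonal determinants as a black box and then simply check the two initial conditions, but I would present the self-contained induction above as the main argument.
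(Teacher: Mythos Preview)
Your proof is correct and follows essentially the same approach as the paper: establish the continuant recurrence $\det(V_n)=(2^k+1)\det(V_{n-1})-2^k\det(V_{n-2})$, check the two base cases $\det(V_1)=M_2^{(k)}$ and $\det(V_2)=M_3^{(k)}$, and conclude by induction. The only difference is that you spell out the Laplace expansion along the last row in full, whereas the paper simply invokes the standard continuant property of tridiagonal determinants.
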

We should note that for all $n \geq 1$, $M_{n+1}^{(k)}> 0$, so we can conclude that the above tridiagonal matrix $ V_{n} $ is a non-singular matrix. Let us assume that $\lambda_1,\lambda_2, ...,\lambda_n$ are $n$ eigenvalues of $V_{n}$, then
\begin{align}\label{eignevalue}
	 \lambda_1\lambda_2 ...\lambda_n = M_{n+1}^{(k)}
	\quad\text{and}\quad \lambda_1+\lambda_2+ ...+\lambda_n = n(2^{k}+1).
\end{align} 
Since \eqref{eignevalue} is valid for all $n \geq 1$, hence all the eigenvalues of $V_{n}$ are positive, therefore matrix $V_{n}$ is a positive definite matrix.

\section{Conclusion}
In summary, we introduced a sequence of higher order Mersenne numbers, which is closely associated with the Mersenne numbers. We presented Binet’s formula, famous identities, generating functions, finite and binomial sums, etc. of this sequence, and established some inter-relations with Mersenne and higher order Jacobsthal numbers. In addition, a new sequence associated with the binomial transforms of higher order Mersenne numbers is identified and their recurrence relation and some properties are also examined. Lastly, we presented generator matrix and tridiagonal matrix associated with higher order Mersenne numbers.  


Kalika Prasad
\\Department of Mathematics, Central University of Jharkhand, Ranchi, India, 835205
\\Email: \url{klkaprsd@gmail.com},\\\\
Munesh Kumari
\\Department of Mathematics, Central University of Jharkhand, Ranchi, India, 835205
\\Email: \url{muneshnasir94@gmail.com},\\\\
Rabiranjan Mohanta,
\\Department of Mathematics, Central University of Jharkhand, Ranchi, India, 835205
\\Email: \url{ranjanrabi.511@gmail.com},\\\\
Hrishikesh Mahato
\\Department of Mathematics, Central University of Jharkhand, Ranchi, India, 835205
\\Email: \url{hrishikesh.mahato@cuj.ac.in}

\end{document}